\tikzstyle{EDR}=[draw=lightgray,line width=0pt,preaction={clip, postaction={pattern=north east lines, pattern color=gray}}]
\tikzstyle{EDR1}=[draw=lightgray,line width=0pt,preaction={clip, postaction={pattern=north west lines, pattern color=gray}}]
\definecolor{mygray}{gray}{0.95}
\definecolor{mypink1}{rgb}{1.2,1.1,0.9}
\definecolor{mypink2}{rgb}{1.0,0.95 ,0.9}
\definecolor{mypink3}{rgb}{1.0,0.6,0.7}
\numberwithin{equation}{section}
\newtheorem{theorem}{Theorem}[section]
\newtheorem{lemma}[theorem]{Lemma}
\newtheorem{corollary}[theorem]{Corollary}
\newtheorem{proposition}[theorem]{Proposition}
\numberwithin{equation}{section}
\newcommand{\beq}{\begin{equation}}
	\newcommand{\eeq}{\end{equation}}
\newcommand{\beqq}{\begin{equation*}}
	\newcommand{\eeqq}{\end{equation*}}
\newcommand{\ben}{\begin{eqnarray}}
	\newcommand{\een}{\end{eqnarray}}
\newcommand{\beno}{\begin{eqnarray*}}
	\newcommand{\eeno}{\end{eqnarray*}}
\begin{document}
\setcounter{page}{1}
\title[Noncommutative spherical maximal inequality associated with automorphisms]
{Noncommutative spherical maximal inequality associated with automorphisms}

\author[]{Cheng Chen}
\address{
Cheng Chen\\
Department of Mathematics\\
Sun Yat-sen(Zhongshan) University\\
Guangzhou 510275 \\
China}

\email{chench575@mail.sysu.edu.cn}

\author[]{Guixiang Hong}
\address{
Guixiang Hong\\
Institute for Advanced Study in Mathematics\\
Harbin Institute of Technology\\
Harbin
150001\\
China}
\email{gxhong@hit.edu.cn}


%


\thanks{}

\subjclass[2010]{Primary  46L51; Secondary 42B20}
\keywords{Noncommutative $L_{p}$ spaces, Discrete spherical maximal inequality, The circle method, Noncommutative maximal norms, Noncommutative sampling principle}

\date{\today}

\maketitle

\section*{\textbf{Abstract}}
In this paper, we establish a noncommutative spherical maximal inequality associated with automorphisms on von Neumann algebras, extending  Magyar-Stein-Wainger's discrete spherical maximal inequality to the noncommutative setting.

\section{\textbf{Introduction}}\label{sec1}
Modern discrete harmonic analysis developed out of Bourgain's work on pointwise ergodic theorems along polynomial orbits (cf. \cite{Bou88a, Bou88b, Bou89}). Somewhat less motivated by ergodic considerations, Stein and his collaborators began to study the singular integrals and maximal functions in the discrete context three decades ago (see e.g. \cite{SW90, SW99, SW00, MSW02, SW02}). Ever since their work, many research articles in discrete harmonic analysis came out, for instance, \cite{Ion04, IW05, KL19, Hug19, Hug20, KL20a, KL20b} and so on. It is now well-known that discrete harmonic analysis is a rapidly developing field of mathematics that fuses together classical Fourier analysis, probability theory, ergodic theory, analytic number theory and additive combinatorics in new and interesting ways. 

In the present paper, we are interested in Magyar-Stein-Wainger's spherical maximal inequality (cf. \cite{MSW02}), which was a breakthrough after the above-mentioned Bourgain's seminal papers and have been playing an important role in the development of discrete harmonic analysis. Let us describe the results.
Let $f\in \mathscr{S}(\mathbb{Z}^{d})$ be a Schwartz function on $\mathbb{Z}^{d}$ and define the spherical average operator $M_{\lambda}$ as follows:\\
$$M_{\lambda}(f)(n):=\frac{1}{r_{d}(\lambda^{2})}\sum_{|m|= \lambda}f(n-m).$$
Here $r_{d}(k)$ is the standard counting function giving the number of ways of
representing $k$ as a sum of $d$ squares. It is well-known in number theory (cf. \cite{Wal57, HW60}) that the counting function $r_{d}(k)$ is irregular in $k$ when $d\leq 4$ and $r_{d}(k)\approx k^{\frac{d}{2}-1}$ when $d\geq 5$. The associated spherical maximal operator $M$ is then given by:
$$M(f)(n):={\sup_{0<\lambda<\infty}}\ |M_{\lambda}(f)(n)|.$$
Magyar obtained firstly a local spherical maximal inequality (cf. \cite{Mag97}) and later, with Stein and Wainger, they proved the following spherical maximal inequality\\
$$\|Mf\|_{p}\leq C\|f\|_{p},\ \forall f\in \ell_{p}(\mathbb{Z}^{d}),\ p>\frac{d}{d-2},\ d\geq 5$$
for some constant $C$ depending on $d$ and $p$. They also proved by taking counter-examples that the ranges of $p$ and $d$ in the spherical maximal inequality are optimal. 

Noncommutative analysis is another rapidly developing field of mathematics, motivated by quantum mechanics, noncommutative geometry, operator algebras, ergodic theory and so on. The noncommutative analogues of such fundamental inequalities as martingale inequalities, the maximal ergodic inequalities, the Hardy-Littlewood maximal inequality, Stein's spherical maximal inequality etc., have always been one of the hottest topic in the research field of noncommutative analysis (see e.g. \cite{PX97, Jun02, Ra02, JX03, CAD, JX07, Mei07, JX08, Be, LeXu12, HoSu18, HLW21, HRW, HLSX} and the references therein). Among these work, it is worth mentioning that Junge and Xu \cite{JX07} proved the noncommutative analogue of Dunford-Schwartz maximal ergodic inequality. Beyond the setting of Junge and Xu, Hong, Liao and Wang \cite{HLW21} proved the noncommutative maximal ergodic inequality associated with actions of groups of polynomial growth. In another work \cite{Hong13}, Hong also proved the noncommutative spherical maximal inequality over the Euclidean space $\mathbb{R}^{d}$ (see also \cite{Hong20}). Despite these advancements, to our best of knowledge, there remains a gap in the literature regarding a noncommutative analogue of Magyar-Stein-Wainger's discrete spherical maximal inequality. In this paper, we set out to bridge this gap.

Let $\mathcal{M}$ be a von Neumann algebra with a normal semifinite faithful trace $\tau$. Let $\gamma_{i},\ 1\leq i\leq d$ be a commuting family of trace-preserving automorphisms on $\mathcal{M}$, i.e., $\tau\circ \gamma_{i}=\tau$ and $\gamma_{i}\circ \gamma_{j}=\gamma_{j}\circ \gamma_{i}$ for all $1\leq i,\ j\leq d$. The noncommutative $L_{p}$ space associated to $(\mathcal{M},\ \tau)$ is denoted by $L_{p}(\mathcal{M})$, and it is well known that each $\gamma_{i}$ extends to an isometry on $L_{p}(\mathcal{M})$ for all $1\leq p\leq \infty$. We define the noncommutative spherical average operator associated with $\gamma=(\gamma_{i})_{1\leq i\leq d}$ on $L_{p}(\mathcal{M})$ as follows:\\
$$M^{\gamma}_{\lambda}x:=\frac{1}{r_{d}(\lambda^{2})}\sum_{|n|=\lambda}\gamma^{n}x,\ \forall x\in \mathcal{S}_{\mathcal{M}},$$
where $n=(n_{i})_{1\leq i\leq d}\in \mathbb{Z}^{d}$ and $\gamma^{n}:=\gamma_{1}^{n_{1}}\gamma_{2}^{n_{2}}\cdots \gamma_{d}^{n_{d}}$. Then we have the following noncommutative spherical inequality associated with $\gamma$, which is our main result.\\

\begin{theorem}\label{20240929h}
For all $d\geq 5$ and all $\frac{d}{d-2}<p\leq \infty$, we have the following inequality\\
$$\|\underset{0<\lambda<\infty}{\text{sup}^{+}}M_{\lambda}^{\gamma}x\|_{L_{p}(\mathcal{M})}\leq C\|x\|_{L_{p}(\mathcal{M})},\ \forall x\in L_{p}(\mathcal{M})$$
for some constant $C$ depending on $d$ and $p$.\\
\end{theorem}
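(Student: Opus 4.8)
The plan is to follow the Magyar--Stein--Wainger strategy (the circle method) and transplant it to the von Neumann algebra setting by replacing Fourier multipliers on $\mathbb{Z}^d$ with the corresponding operators built from the commuting automorphisms $\gamma=(\gamma_i)$, and by replacing the scalar $\ell_p$ maximal inequality with the noncommutative $\operatorname{sup}^+$ maximal inequality. Concretely, the joint spectral data of the commuting isometries $\gamma_1,\dots,\gamma_d$ give a measurable functional calculus: for a bounded function $m$ on the torus $\mathbb{T}^d$ one defines an operator $T_m$ on $L_2(\mathcal{M})$, and the spherical average $M_\lambda^\gamma$ is exactly $T_{\widehat{d\sigma_\lambda}}$ where $\widehat{d\sigma_\lambda}(\xi)=\frac{1}{r_d(\lambda^2)}\sum_{|n|=\lambda}e^{2\pi i n\cdot\xi}$. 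The first step is therefore to set up this functional calculus carefully (using that each $\gamma_i$ is trace-preserving, hence unitary on $L_2(\mathcal{M})$, and that they commute, so one has a joint spectral resolution) and to record that any $L_2$ Fourier multiplier bound transfers verbatim to an $L_2(\mathcal{M})$ bound.

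Next I would carry out the Hardy--Littlewood / circle method decomposition of the kernel. Following \cite{MSW02}, one splits $\widehat{d\sigma_\lambda}$ into a main term supported near rational points with small denominators plus an error term; the main term is further approximated by a product of a Gauss-sum factor (a function on the finite group $(\mathbb{Z}/q\mathbb{Z})^d$, periodic in $\xi$) and a smooth bump times the Euclidean spherical measure $\widehat{d\sigma_\lambda^{\mathbb{R}^d}}$ rescaled. This is the familiar approximation formula $M_\lambda^\gamma \approx \sum_{q}\sum_{a} (\text{Gauss sum multiplier})\cdot(\text{continuous spherical multiplier})$ plus an error whose maximal function is already controlled by an $\ell^2$-sum square function estimate. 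The automorphism setting does not affect these number-theoretic estimates at all — they live entirely on the multiplier side — so this step is quotation rather than reproof.

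The core of the argument is to bound the maximal operator associated with the model term. Here one invokes the noncommutative sampling principle (as advertised in the keywords): the periodic Gauss-sum factor can be handled by passing to $L_p(\mathcal{M}\otimes \mathbb{M}_q^{\otimes d})$ or by an averaging over the finite group, reducing matters to a "continuous" spherical maximal operator built from the $\gamma_i$ together with the Euclidean dilations, and then applying \emph{Hong's} noncommutative spherical maximal inequality over $\mathbb{R}^d$ from \cite{Hong13} — transferred to the discrete/automorphism setting via the transference principle for amenable (here, $\mathbb{Z}^d$) actions. The supremum over $\lambda$ is dyadically decomposed: for $\lambda$ in a dyadic block one pays a small power loss in $q$ which is summable once $p>\frac{d}{d-2}$, exactly as in the commutative proof; between dyadic scales one uses a square-function / $g$-function estimate (a fundamental-theorem-of-calculus trick in the $\lambda$ variable), which in the noncommutative setting is the column/row square function bound and is again a known ingredient. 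Interpolation with the trivial $L_\infty(\mathcal{M})$ bound (each $M_\lambda^\gamma$ is a contraction, being an average of isometries, and $\operatorname{sup}^+$ of contractions is bounded on $L_\infty$) then yields the full range.

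The main obstacle I expect is the bookkeeping of the noncommutative maximal norm $\operatorname{sup}^+$ through the decomposition: unlike the scalar case, one cannot simply take absolute values and pointwise suprema, so each term in the circle-method splitting must be organized so that the relevant operator is either (i) of the form "$\operatorname{sup}^+$ of a known noncommutative maximal operator composed with contractions", or (ii) dominated in the $\operatorname{sup}^+$-seminorm by a noncommutative square function that one can estimate in $L_p(\mathcal{M}; \ell_2^c)$ and $L_p(\mathcal{M}; \ell_2^r)$. Getting the error terms from the minor arcs and from the approximation $\widehat{d\sigma_\lambda}\approx$ (Gauss sum)$\cdot$(smooth part) into that square-function form — with constants summable over the denominator $q$ for every $p>\frac{d}{d-2}$ — is the delicate part, and it is where the restriction $d\ge 5$ (so that $r_d(\lambda^2)\approx \lambda^{d-2}$ and the singular series converges) and the endpoint exponent enter. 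The transference step also needs care: one must check that Hong's $\mathbb{R}^d$ result genuinely transfers to an action of $\mathbb{Z}^d$ by automorphisms with uniform constants, which should follow from the standard Calderón transference argument adapted to the noncommutative $\operatorname{sup}^+$ norm, but it is worth stating as a lemma.
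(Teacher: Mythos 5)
Your plan follows essentially the same route as the paper: a circle-method decomposition of the discrete spherical multiplier, control of the main (Gauss-sum times continuous spherical) term via the noncommutative sampling principle of \cite{CHW24} combined with Hong's noncommutative spherical maximal inequality on $\mathbb{R}^{d}$ from \cite{Hong13}, $L_{2}$-decaying error estimates summable over denominators $q$ and dyadic scales for $p>\frac{d}{d-2}$, interpolation with the trivial $L_{\infty}$ bound, and a Calder\'on-type transference to the automorphism setting. The paper implements exactly this scheme, working first on $L_{p}(\ell_{\infty}(\mathbb{Z}^{d})\Bar{\otimes}\mathcal{M})$ and transferring to $L_{p}(\mathcal{M})$ only at the end (the transference lemma you flag as missing is precisely its Step 2, using $f(n)=\gamma^{n}x$, truncation and a limiting argument), and it treats the error terms with a Magyar-type local maximal inequality and direct $L_{2}$ multiplier bounds rather than your square-function variant, but these are minor differences of bookkeeping rather than of approach.
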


For the precise definitions of noncommutative spaces $L_{p}(\mathcal{M})$, the dense ideal $\mathcal{S}_{\mathcal{M}}$ consisting of elements with supports of finite trace and noncommutative maximal norms appeared above, we refer the readers to the next section.

Let $e_{i}\in \mathbb{Z}^{d},\ 1\leq i\leq d$ be such that it takes value 1 at the $i$-th entry and vanishes elsewhere, and let $T_{i}$ be the translation map on $\mathbb{Z}^{d}$ given by $T_{i}n=n+e_{i}$. For a Schwartz function $f$ on $\mathbb{Z}^{d}$, we can define $T_{i}f(n):=f(T_{i}n)$ by abusing the notations. In this way, the spherical average operator $M_{\lambda}$ on $\mathbb{Z}^{d}$ can be rewritten as follows:\\
$$M_{\lambda}f=\frac{1}{r_{d}(\lambda^{2})}\sum_{|n|=\lambda}T^{n}f,$$
where $T=(T_{1},T_{2},\cdots,T_{d}),\ n=(n_{1},n_{2},\cdots,n_{d})$ and $T^{n}:=T_{1}^{n_{1}}T_{2}^{n_{2}}\cdots T_{d}^{n_{d}}$. And then the classical discrete spherical maximal inequality is interpreted as\\
$$\big\|{\sup_{0<\lambda<\infty}}|\frac{1}{r_{d}(\lambda^{2})}\sum_{|n|=\lambda}T^{n}f|\big\|_{p}\leq C\|f\|_{p}.$$
From this point of view, Theorem \ref{20240929h} extends Magyar-Stein-Wainger's result by taking automorphisms on noncommutative spaces $L_{p}(\mathcal{M})$ in place of the translation maps on $\ell_{p}(\mathbb{Z}^{d})$. This extension is challenging due to not only inherent complexities and subtleties associated with noncommutative algebras but also the lack of mature tools in noncommutative analysis. It remains unclear to adapt Magyar-Stein-Wainger's method---approximations by the circle method (cf. \cite{MSW02})---to the noncommutative case until a noncommutative transference/sampling principle associated with noncommutative maximal norms comes out (cf. \cite{CHW24}). On the other hand, like in \cite{CHW24} it seems extremely challenging to deduce the individual ergodic theorem from Theorem \ref{20240929h} since there is a lack of a natural dense subclass on which the pointwise convergence holds easily. New approach has to be invented, and we will take care of this elsewhere.

We end this introduction with a brief description of the organization of the remainder of the paper. After displaying some basic notions and properties associated with noncommutative $L_{p}$ spaces and noncommutative maximal norms, we give an introduction to the circle method in section \ref{sec2}, which plays an important role in later sections. Section \ref{sec3} contains the proof of a local spherical maximal inequality, extending Magyar's result to the noncommutative setting. Applying Magyar-Stein-Wainger's method, we decompose and approximate the spherical average operator $M_{\lambda}$ via the circle method and give a proper estimate of the approximation in section \ref{sec4}. Section \ref{sec5} deals with the approximation operator. We utilize the noncommutative sampling principle associated with noncommutative maximal norms to prove a maximal inequality associated with the approximation operator in this section. Section \ref{sec6} is devoted to the proof of Theorem \ref{20240929h}.

Throughout the paper, $C$ stands for a positive constant independent of the main parameters, while it may vary from line to line.\\

\section{\textbf{Preliminaries}}\label{sec2}
\subsection{Noncommutative $L_{p}$ spaces}\label{sec2.1}
In this paper, we focus on the noncommutative $L_{p}$ spaces based on semifinite von Neumann algebras. Let $\mathcal{M}$ be a semifinite von Neumann algebra equipped with a normal semifinite faithful trace $\tau$. Let $\mathcal{S}^{+}_{\mathcal{M}}$ be the set of all $x\in \mathcal{M}_{+}$ such that $\tau(\text{supp}(x))<\infty$, where $\text{supp}(x)$ denotes the support of $x$. Let $\mathcal{S}_{\mathcal{M}}$ be the linear span of $\mathcal{S}^{+}_{\mathcal{M}}$, then $\mathcal{S}_{\mathcal{M}}$ is weak*-dense $*$-subalgebra of $\mathcal{M}$. For $1\leq p<\infty$, $(\mathcal{S}_{\mathcal{M}},\ \|\cdot\|_{p})$ is a normed space, where
$$\|x\|_{p}:=(\tau(|x|^{p}))^{\frac{1}{p}},\ x\in \mathcal{S}_{\mathcal{M}}$$
with $|x|=(x^{*}x)^{1/2}$ being the modulus of $x$. The noncommutative $L_{p}$ space associated with $(\mathcal{M},\ \tau)$ is the completion of $(\mathcal{S}_{\mathcal{M}},\ \|\cdot\|_{p})$, denoted by $L_{p}(\mathcal{M},\ \tau)$ or simply by $L_{p}(\mathcal{M})$. As usual, we set $L_{\infty}(\mathcal{M}):=\mathcal{M}$ with the operator norm. See \cite{PX03} for more information on noncommutative $L_{p}$ spaces.

Given another semifinite von Neumann algebra $\mathcal{N}$, we can define the von Neumann algebraic tensor product $\mathcal{M}\Bar{\otimes}\mathcal{N}$ as follows: If $\mathcal{M}$ and $\mathcal{N}$ act faithfully on the Hilbert spaces $\mathbb{H}$ and $\mathbb{K}$ respectively, then $\mathcal{M}\Bar{\otimes}\mathcal{N}$ is defined to be the von Neumann algebra in $\mathcal{L}(\mathbb{H}\otimes\mathbb{K})$ generated by all elements of the form $x\otimes y,\ x\in \mathcal{M},\ y\in \mathcal{N}$, where $x\otimes y$ is the bounded linear operator on $\mathbb{H}\otimes\mathbb{K}$ satisfying\\
$$x\otimes y(\xi\otimes \eta)=x\xi\otimes y\eta,\ \forall \xi\in \mathbb{H},\ \forall \eta\in \mathbb{K}.$$
Although the definition seems to depend on the choice of the Hilbert spaces $\mathbb{H}$ and $\mathbb{K}$, $\mathcal{M}\Bar{\otimes}\mathcal{N}$ is uniquely determined up to $*$-isomorphism.\\

Throughout the paper, we consider only the special case $\mathcal{N}=L_{\infty}(\Omega)$ with $\Omega$ being a $\sigma$-finite measure space. In this case, the noncommutative space $L_{p}(L_{\infty}(\Omega)\Bar{\otimes}\mathcal{M}),\ 1\leq p\leq \infty$ is isometrically isomorphic to the Bochner space $L_{p}(\Omega;L_{p}(\mathcal{M}))$. Here and in the sequel, we will not distinguish these two notions unless explictly stated otherwise.\\

\subsection{The spaces $L_{p}(\mathcal{M};\ell_{\infty})$}\label{sec2.2}
The spaces $L_{p}(\mathcal{M};\ell_{\infty}),\ 1\leq p\leq \infty$ are the noncommutative analogues of the usual Bochner spaces $L_{p}(\Omega;\ell_{\infty})$. They were first introduced by Pisier \cite{Pis98} for hyperfinite von Neumann algebras and then extended to general von Neumann algebras by Junge \cite{Jun02}. The definitions and properties below can be found in \cite{HLW21} and \cite{JX07}.

For $1\leq p\leq \infty$, the space $L_{p}(\mathcal{M};\ell_{\infty})$ is the space consisting of all sequences $(x_{n})_{n\in \mathbb{N}^{*}}$ in $L_{p}(\mathcal{M})$, each of which admits such a factorization:\\
$$x_{n}=ay_{n}b,\ a,\ b\in L_{2p}(\mathcal{M}),\ y_{n}\in \mathcal{M}\ \text{with}\ \text{sup}_{n}\|y_{n}\|_{\infty}<\infty.$$
The norm of $(x_{n})_{n\in \mathbb{N}^{*}}$ is then defined as follows:\\
$$\|(x_{n})_{n\in \mathbb{N}^{*}}\|_{L_{p}(\mathcal{M};\ell_{\infty})}:=\text{inf}\{\|a\|_{2p}\text{sup}_{n}\|y_{n}\|_{\infty}\|b\|_{2p}\},$$
where the infimum runs over all factorizations as above. In this setting, the space $L_{p}(\mathcal{M};\ell_{\infty})$ is a Banach space. Moreover, if $(x_{n})_{n\in \mathbb{N}^{*}}$ is a sequence of self-adjoint elements, then $(x_{n})_{n\in \mathbb{N}^{*}}\in L_{p}(\mathcal{M};\ell_{\infty})$ if and only if there exists a positive element $a\in L_{p}(\mathcal{M})$ such that $-a\leq x_{n}\leq a$ for all $n$. In this case,
$$\|(x_{n})_{n\in \mathbb{N}^{*}}\|_{L_{p}(\mathcal{M};\ell_{\infty})}=\text{inf}\{\|a\|_{p},a\in L_{p}(\mathcal{M}),\ a\geq 0\ \text{and}\ \ -a\leq x_{n}\leq a,\ \forall n\in \mathbb{N}^{*}\}.$$
Given the formula above, the norm $\|(x_{n})_{n\in \mathbb{N}^{*}}\|_{L_{p}(\mathcal{M};\ell_{\infty})}$ is commonly denoted by $\|\text{sup}^{+}_{n}x_{n}\|_{p}$ since the latter is more intuitive. It is worth mentioning that the notation $\text{sup}^{+}_{n}x_{n}$ does not make any sense in general.

For any index set $I$, we can similarly define the space $L_{p}(\mathcal{M};\ell_{\infty}(I))$ of families $(x_{i})_{i\in I}\subseteq L_{p}(\mathcal{M})$. One can quickly see $(x_{i})_{i\in I}\in L_{p}(\mathcal{M};\ell_{\infty}(I))$ if and only if $(x_{i})_{i\in J}\in L_{p}(\mathcal{M};\ell_{\infty}(J))$ for all finite subsets $J\subseteq I$ and\\
$${\sup_{I\supseteq J\ \text{finite}}}\|{\sup_{i\in J}}^{+}x_{i}\|_{p}<\infty.$$
In this case, we have\\
$$\|{\sup_{i\in I}}^{+}x_{i}\|_{p}={\sup_{I\supseteq J\ \text{finite}}}\|{\sup_{i\in J}}^{+}x_{i}\|_{p}.$$
We have the following fundamental complex interpolation theorem associated with the noncommutative maximal $L_{p}$ norms defined above.\\
\begin{lemma}
Let $1\leq p_{0}<p_{1}\leq \infty$ and $0<\theta<1$, then we have isometrically\\
\begin{equation*}
    L_{p}(\mathcal{M};\ell_{\infty}(I))=( L_{p_{0}}(\mathcal{M};\ell_{\infty}(I)),\  L_{p_{1}}(\mathcal{M};\ell_{\infty}(I)))_{\theta},
\end{equation*}
where $p=\frac{1-\theta}{p_{0}}+\frac{\theta}{p_{1}}$.\\
\end{lemma}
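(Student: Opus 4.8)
The plan is to prove the two contractive inclusions between $L_p(\mathcal{M};\ell_\infty(I))$ and $X_\theta := (L_{p_0}(\mathcal{M};\ell_\infty(I)),\, L_{p_1}(\mathcal{M};\ell_\infty(I)))_\theta$ separately, working throughout from the factorization description of the maximal norm. The inclusion $L_p(\mathcal{M};\ell_\infty(I)) \hookrightarrow X_\theta$ is proved by an explicit analytic-family argument valid for any index set $I$; the reverse inclusion $X_\theta \hookrightarrow L_p(\mathcal{M};\ell_\infty(I))$ is the substantive one, and I would first establish it for a finite index set and then pass to arbitrary $I$ using the net identity $\|\sup^+_{i\in I}x_i\|_p = \sup_{J\text{ finite}}\|\sup^+_{i\in J}x_i\|_p$ together with the observation that, for finite $J\subseteq I$, the restriction and extension-by-zero maps realize $L_{p_j}(\mathcal{M};\ell_\infty(J))$ as a $1$-complemented subspace of $L_{p_j}(\mathcal{M};\ell_\infty(I))$ compatibly in $j$, hence $(L_{p_0}(\mathcal{M};\ell_\infty(J)), L_{p_1}(\mathcal{M};\ell_\infty(J)))_\theta$ as a $1$-complemented subspace of $X_\theta$.

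For $L_p(\mathcal{M};\ell_\infty(I))\hookrightarrow X_\theta$: given $(x_i)_{i\in I}$ with maximal norm $<1$, after a harmless rescaling fix a factorization $x_i = a y_i b$ with $a,b\ge 0$, $\|a\|_{2p}=\|b\|_{2p}=1$ and $\sup_i\|y_i\|_\infty<1$. Set $\alpha(z) := \tfrac{p}{p_0}(1-z)+\tfrac{p}{p_1}z$, so $\alpha(\theta)=1$ and $\Re\alpha(z)\in[\,p/p_1,\,p/p_0\,]$ on the closed strip, and put $F(z) := (a^{\alpha(z)} y_i\, b^{\alpha(z)})_{i\in I}$, the operator powers being defined by functional calculus. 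Since $L_q(\mathcal{M};\ell_\infty(I))\subseteq L_{p_0}(\mathcal{M};\ell_\infty(I))+L_{p_1}(\mathcal{M};\ell_\infty(I))$ for $p_0\le q\le p_1$, the function $F$ takes values in the sum space and is bounded on the strip; it is analytic there and continuous up to the boundary — to see this one reduces by a routine approximation to $a,b$ bounded and bounded below on their supports, where analyticity follows from the spectral theorem since $z\mapsto \lambda^{\alpha(z)}$ is analytic uniformly on compact subsets of $(0,\infty)$. On the boundary, $|a^{\alpha(it)}| = a^{p/p_0}$, hence $\|a^{\alpha(it)}\|_{2p_0}=\|a\|_{2p}^{p/p_0}=1$, and likewise for $b$, so $\|F(it)\|_{L_{p_0}(\mathcal{M};\ell_\infty(I))}\le\sup_i\|y_i\|_\infty<1$; similarly $\|F(1+it)\|_{L_{p_1}(\mathcal{M};\ell_\infty(I))}<1$ (reading $\|a^{\alpha(1+it)}\|_\infty\le 1$ when $p_1=\infty$). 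Since $F(\theta)=(x_i)$, this gives $\|(x_i)\|_{X_\theta}<1$.

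For $X_\theta\hookrightarrow L_p(\mathcal{M};\ell_\infty(I))$ with $I$ finite: given $(x_i)\in X_\theta$ with norm $<1$, there is an analytic $F$ in the Calderón class with $F(\theta)=(x_i)$ and boundary bounds $<1$, and the goal is to extract from it a single factorization $x_i = a y_i b$ with $a,b\in L_{2p}(\mathcal{M})$ of norm $<1$ and $\sup_i\|y_i\|_\infty<1$. The natural way to organize this is to record, following Pisier, that the maximal norm is the norm of the module product $L_{2p}(\mathcal{M})\cdot\ell_\infty(I;\mathcal{M})\cdot L_{2p}(\mathcal{M})$ — i.e., $L_p(\mathcal{M};\ell_\infty(I)) = L^{r}_{2p}(\mathcal{M})\otimes_h \ell_\infty(I)\otimes_h L^{c}_{2p}(\mathcal{M})$ is the quotient of $L_{2p}(\mathcal{M})\otimes\ell_\infty(I;\mathcal{M})\otimes L_{2p}(\mathcal{M})$ by the multiplication map — and then to combine: $(i)$ Kosaki's interpolation theorem, with its operator-space refinement for the row/column structures, $(L_{2p_0}(\mathcal{M}),L_{2p_1}(\mathcal{M}))_\theta = L_{2p}(\mathcal{M})$; and $(ii)$ the compatibility of complex interpolation with the Haagerup tensor product. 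In the present situation $(ii)$ amounts precisely to a lifting statement: from the measurable boundary factorizations $F(it)=a_0(t)y_i(t)b_0(t)$, $F(1+it)=a_1(t)y_i(t)b_1(t)$ with norms $<1$ one must manufacture analytic $L_{2p}(\mathcal{M})$-valued functions $A(z),B(z)$ with the prescribed boundary behaviour and $F(z)=A(z)(\,\cdot\,)B(z)$, which is carried out via vector-valued outer functions / a noncommutative Hardy-space construction. Evaluating at $z=\theta$ yields $x_i = A(\theta)y_i B(\theta)$ with the required norm bounds, and the passage to arbitrary $I$ from the first paragraph finishes the argument.

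I expect the lifting step in the last paragraph to be the main obstacle: the easy inclusion only needs that a good factorization propagates into an analytic family, whereas the reverse forces one to extract analytic factors from merely measurable boundary factorizations of a prescribed analytic function, and Hardy-space input seems unavoidable here; the endpoint $p_1=\infty$ also needs a little extra care — a separate limiting argument, or working through a predual — since $L_\infty(\mathcal{M};\ell_\infty(I))$ is not reflexive. All of this is essentially contained in the work of Junge--Xu and of Pisier, whose arguments I would follow for the details.
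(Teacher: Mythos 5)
There is an important point of reference first: the paper does not prove this lemma at all --- it is quoted, the text saying only that the definitions and properties of Subsection 2.2 can be found in \cite{HLW21} and \cite{JX07}. So there is no internal argument to compare with; the relevant comparison is with Junge--Xu \cite{JX07} (building on Pisier \cite{Pis98} for the hyperfinite case and Junge \cite{Jun02}), and your outline reproduces exactly the architecture of those proofs: the factorization description of the maximal norm, the contractive inclusion of $L_p(\mathcal{M};\ell_\infty(I))$ into the interpolation space via the analytic family $z\mapsto a^{\alpha(z)}y_i b^{\alpha(z)}$, the reduction to finite index sets together with exhaustion by finite subsets, and the reduction of the converse inclusion to an analytic factorization statement. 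Your first inclusion is essentially complete modulo the routine reductions you indicate (and note that the exponent relation you actually use, which is the correct one, is $1/p=(1-\theta)/p_0+\theta/p_1$; the formula printed in the statement of the lemma is a typo).

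The genuine gap is in the converse inclusion, which is the entire content of the lemma. Items (i)--(ii) of your plan do not by themselves yield it: complex interpolation does not in general commute with quotient maps, and, as you yourself observe, the ``compatibility with the Haagerup tensor product'' you invoke amounts precisely to a lifting statement --- from an element of the Calder\'on space whose boundary values admit merely measurable factorizations one must manufacture analytic $L_{2p_0}$- and $L_{2p_1}$-bounded factors $A(z),B(z)$ with prescribed boundary behaviour. That lifting is exactly where Pisier's memoir and Junge--Xu's argument do their real work (a noncommutative Szeg\H{o}/outer-function factorization, or an equivalent Hardy-space device), and your proposal only names this step and defers it to the literature; so as a self-contained proof the attempt is incomplete at the decisive point, even though the strategy is the standard and correct one. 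Two smaller inaccuracies: the middle factor must carry the $\mathcal{M}$-valued (amalgamated) structure, so the display $L^{r}_{2p}\otimes_h\ell_\infty(I)\otimes_h L^{c}_{2p}$ with scalar $\ell_\infty(I)$ is not the right space (your subsequent quotient description with $\ell_\infty(I;\mathcal{M})$ is the correct formulation), and the endpoint $p_1=\infty$ does require the separate treatment you flag. In fairness, the heart of the proof being a citation rather than an argument is also all that the paper itself offers for this lemma.
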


\subsection{The circle method}\label{sec2.3}
The circle method is introduced firstly by Hardy and Littlewood in \cite{HL66}, but later it is widely used in discrete harmonic analysis (see \cite{Bou88a, Bou88b, Bou89, Mag97, MSW02} and so on). 

The circle method is based on the following Farey sequence.
Fix $\Lambda>0$, the Farey sequence $\mathcal{F}_{\Lambda}$ of order $\Lambda$ is the rearrangement of the set\\
$$\{\frac{a}{q}:\ a,\ q\in \mathbb{N}\ \text{with}\ 1\leq q\leq \Lambda,\ 1\leq a\leq q,\ (a,\ q)=1\}\cup \{\frac{0}{1}\}$$
in ascending order. For any two successive terms $\frac{a}{q},\ \frac{a'}{q'}$ in $\mathcal{F}_{\Lambda}$, it is well known that\\
\begin{equation}\label{20241001a}
q+q'>\Lambda\ \text{and}\ |\frac{a}{q}-\frac{a'}{q'}|=\frac{1}{qq'}.\\   
\end{equation}
For $q>1$, we define an interval associated to $\frac{a}{q}$ as follows:\\
$$I(\frac{a}{q}):=\{s:\ -\frac{\beta}{q\Lambda}\leq s-\frac{a}{q}< \frac{\alpha}{q\Lambda}\},$$
where\\
\begin{equation}\label{20241001b}
\beta=\frac{\Lambda}{q+q_{1}}\ \text{and}\ \alpha=\frac{\Lambda}{q+q_{2}}  
\end{equation}
with $\frac{a_{1}}{q_{1}},\ \frac{a}{q},\ \frac{a_{2}}{q_{2}}$ being three successive terms in $\mathcal{F}_{\Lambda}$. For $q=1$, we define the intervals\\
$$I(\frac{0}{1}):=\{s:0\leq s<\frac{\alpha}{\Lambda}\}$$
and\\
$$I(\frac{1}{1}):=\{s:\ -\frac{\beta}{\Lambda}\leq s-1\leq 0\}$$
with $\alpha=\beta=\frac{\Lambda}{1+[\Lambda]}$. It follows from \eqref{20241001a} and \eqref{20241001b} that $\frac{1}{2}<\alpha,\beta<1$. And it is easy to check by definition that $\{I(\frac{a}{q})\}_{\frac{a}{q}\in \mathcal{F}_{\Lambda}}$ is a partition of the interval $[0,\ 1]$. That is,
$$\cup_{\frac{a}{q}\in \mathcal{F}_{\Lambda}}I(\frac{a}{q})=[0,\ 1]$$
and\\
$$I(\frac{a}{q})\cap I(\frac{a'}{q'})=\emptyset$$
if $\frac{a}{q}\neq \frac{a'}{q'}$. This partition of the interval $[0,\ 1]$ is exactly the key of the circle method.

\section{\textbf{A local spherical maximal inequality}}\label{sec3}
In this section, we shall prove a local spherical maximal inequality associated with the operator $M_{\lambda}$ on the noncommutative space $L_{p}(\ell_{\infty}(\mathbb{Z}^{d})\Bar{\otimes}\mathcal{M})$.

Recall that the kernel $K_{\lambda}$ of the operator $M_{\lambda}$ is given as follows:
\begin{equation*}
K_{\lambda}(m)=
\begin{cases}
\frac{1}{r_{d}(\lambda^{2})} &\text{if}\ |m|=\lambda\\
0 &\text{if}\ |m|\neq \lambda\\
\end{cases}
,\ m\in \mathbb{Z}^{d}.\\
\end{equation*}
Noting that\\
$$\int_{0}^{1} e^{2\pi i(|m|^{2}-\lambda^{2})s}ds=\bigg\{\quad \begin{matrix}
    1,\qquad |m|=\lambda\\ 0,\qquad |m|\neq \lambda \end{matrix},$$
we have\\
$$M_{\lambda}f(n)=\frac{e^{2\pi\epsilon \lambda^{2}}}{r_{d}(\lambda^{2})}\sum_{m\in \mathbb{Z}^{d}}e^{-2\pi\epsilon |m|^{2}}f(n-m)\int_{0}^{1}e^{2\pi i(|m|^{2}-\lambda^{2})s} ds$$
$$=\frac{e^{2\pi\epsilon \lambda^{2}}}{r_{d}(\lambda^{2})}\int_{0}^{1} \sum_{m\in \mathbb{Z}^{d}}e^{-2\pi\epsilon |m|^{2}}e^{2\pi i(|m|^{2}-\lambda^{2})s}f(n-m)ds.$$
Here $\epsilon>0$ is a parameter, whose value will be determined later.

Let $T^{\epsilon}_{s}$ be the convolution operator, whose kernel $K^{\epsilon}_{s}$ is given by $K^{\epsilon}_{s}(m):=e^{-2\pi |m|^{2}(\epsilon -is)}$. Then we have
$$M_{\lambda}f(n)= \frac{e^{2\pi \epsilon \lambda^{2}}}{r_{d}(\lambda^{2})}\int_{0}^{1}e^{-2\pi i\lambda^{2}s}T^{\epsilon}_{s}f(n)ds.$$
Therefore, the estimate of $M_{\lambda}$ reduces to the estimate of the operator $T^{\epsilon}_{s}$. The crucial point is that we can estimate the Fourier multiplier of $T^{\epsilon}_{s}$ separately in each interval $I(\frac{a}{q})$ by using Poisson summation formula and the properties of Gauss sums, as is shown in the following proposition.

\begin{proposition}
Let $\epsilon=\Lambda^{-2}$, then we have
\begin{equation}\label{230919a}
\|T^{\epsilon}_{s}f\|_{L_{2}(\ell_{\infty}(\mathbb{Z}^{d})\Bar{\otimes}\mathcal{M})}\leq Cq^{-d/2}(\Lambda^{-2}+|t|)^{-d/2}\|f\|_{L_{2}(\ell_{\infty}(\mathbb{Z}^{d})\Bar{\otimes}\mathcal{M})}.
\end{equation}
Here $t=s-\frac{a}{q}$ with $s\in I(\frac{a}{q})$ and $C$ is a positive constant depending only on $d$.\\
\end{proposition}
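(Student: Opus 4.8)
\emph{Proof proposal.} The plan is to reduce the claimed operator-norm inequality to a pointwise bound for the Fourier symbol of $T^{\epsilon}_{s}$, and then to establish that bound by the circle method, through the Poisson summation formula together with the classical estimate for quadratic Gauss sums. First I would use that $\ell_{\infty}(\mathbb{Z}^{d})$ carries the counting measure as its trace, so that $L_{2}(\ell_{\infty}(\mathbb{Z}^{d})\Bar{\otimes}\mathcal{M})=\ell_{2}(\mathbb{Z}^{d};L_{2}(\mathcal{M}))$ isometrically, and that $T^{\epsilon}_{s}$ acts only on the $\mathbb{Z}^{d}$-coordinate, namely as the ampliation $\mathrm{conv}_{K^{\epsilon}_{s}}\otimes\mathrm{id}_{L_{2}(\mathcal{M})}$. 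Since $L_{2}(\mathcal{M})$ is a Hilbert space, the norm of this ampliation equals $\|\mathrm{conv}_{K^{\epsilon}_{s}}\|_{B(\ell_{2}(\mathbb{Z}^{d}))}$, which by Plancherel on $\mathbb{T}^{d}$ is $\|\widehat{K^{\epsilon}_{s}}\|_{L_{\infty}(\mathbb{T}^{d})}$, where
\[
\widehat{K^{\epsilon}_{s}}(\xi)=\sum_{m\in\mathbb{Z}^{d}}e^{-2\pi|m|^{2}(\epsilon-is)}e^{-2\pi im\cdot\xi},\qquad \xi\in\mathbb{T}^{d}
\]
(the multiplier is bounded since $|K^{\epsilon}_{s}|\in\ell_{1}(\mathbb{Z}^{d})$). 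Thus it suffices to prove $|\widehat{K^{\epsilon}_{s}}(\xi)|\le Cq^{-d/2}(\Lambda^{-2}+|t|)^{-d/2}$ uniformly in $\xi\in\mathbb{T}^{d}$, with $t=s-\tfrac{a}{q}$, $s\in I(\tfrac{a}{q})$.

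For this symbol estimate I would write $s=\tfrac{a}{q}+t$ and split $m=q\ell+r$ with $r$ running over $(\mathbb{Z}/q\mathbb{Z})^{d}$ and $\ell\in\mathbb{Z}^{d}$. Since $|q\ell+r|^{2}=q^{2}|\ell|^{2}+2q(\ell\cdot r)+|r|^{2}$ and $q^{2}|\ell|^{2}\tfrac{a}{q},\,2q(\ell\cdot r)\tfrac{a}{q}\in\mathbb{Z}$, one has $e^{2\pi i|q\ell+r|^{2}a/q}=e^{2\pi i|r|^{2}a/q}$, so the symbol factors as
\[
\widehat{K^{\epsilon}_{s}}(\xi)=\sum_{r\in(\mathbb{Z}/q\mathbb{Z})^{d}}e^{2\pi ia|r|^{2}/q}\sum_{\ell\in\mathbb{Z}^{d}}e^{-2\pi|q\ell+r|^{2}(\epsilon-it)}e^{-2\pi i(q\ell+r)\cdot\xi}.
\]
Applying Poisson summation to the inner sum --- the function $u\mapsto e^{-2\pi|u|^{2}(\epsilon-it)}e^{-2\pi iu\cdot\xi}$ is Schwartz, as $\mathrm{Re}(\epsilon-it)=\epsilon>0$, with Fourier transform $(2(\epsilon-it))^{-d/2}e^{-\pi|\xi+\,\cdot\,|^{2}/(2(\epsilon-it))}$ along the principal branch --- and interchanging the finite and infinite sums yields
\[
\widehat{K^{\epsilon}_{s}}(\xi)=\frac{(2(\epsilon-it))^{-d/2}}{q^{d}}\sum_{k\in\mathbb{Z}^{d}}G(a,k;q)\,e^{-\pi|\xi+k/q|^{2}/(2(\epsilon-it))},
\]
where $G(a,k;q)=\sum_{r\in(\mathbb{Z}/q\mathbb{Z})^{d}}e^{2\pi i(a|r|^{2}+r\cdot k)/q}=\prod_{j=1}^{d}g(a,k_{j};q)$ is a product of one-dimensional quadratic Gauss sums. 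Invoking the classical bound $|g(a,m;q)|\le C\sqrt{q}$ for $(a,q)=1$ --- completing the square when $q$ is odd, the even case being standard --- gives $|G(a,k;q)|\le Cq^{d/2}$. Using $|2(\epsilon-it)|^{-d/2}\le C(\epsilon^{2}+t^{2})^{-d/4}\le C(\epsilon+|t|)^{-d/2}$ and $\mathrm{Re}\,\tfrac{1}{2(\epsilon-it)}=\tfrac{\epsilon}{2(\epsilon^{2}+t^{2})}=:R>0$, the estimate reduces to bounding
\[
\sum_{k\in\mathbb{Z}^{d}}e^{-\pi|\xi+k/q|^{2}R}=\prod_{j=1}^{d}\sum_{n\in\mathbb{Z}}e^{-\pi(q\xi_{j}+n)^{2}R/q^{2}}
\]
by a constant depending only on $d$.

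The elementary crux is that, with the choice $\epsilon=\Lambda^{-2}$, one has $R/q^{2}\ge\tfrac14$. Indeed, on $I(\tfrac{a}{q})$ we have $|t|<\tfrac{1}{q\Lambda}$ by \eqref{20241001a}, \eqref{20241001b} and $\alpha,\beta<1$, while $q\le\Lambda$; separating the cases $|t|\le\epsilon$ and $|t|>\epsilon$ gives, respectively, $R/q^{2}\ge\tfrac{1}{4q^{2}\epsilon}=\tfrac{\Lambda^{2}}{4q^{2}}\ge\tfrac14$ and $R/q^{2}\ge\tfrac{\epsilon\Lambda^{2}}{4}=\tfrac14$. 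Consequently each one-dimensional factor is at most $1+2\sum_{m\ge1}e^{-\pi(m-\frac12)^{2}/4}\le C$, uniformly in $\xi$ and in the Farey data, so the product is $\le C_{d}$. Combining the three estimates gives $|\widehat{K^{\epsilon}_{s}}(\xi)|\le Cq^{-d/2}(\Lambda^{-2}+|t|)^{-d/2}$, which is \eqref{230919a}.

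I expect the main obstacle to be the careful bookkeeping in the symbol computation --- the residue decomposition and the Poisson summation step, and especially the quadratic Gauss sum bound when $q$ is even, where $2a$ need not be invertible modulo $q$ and completing the square requires an extra argument. Once the normalization $\epsilon=\Lambda^{-2}$ is fed into the width estimate $R/q^{2}\gtrsim 1$ (which is where the properties of the Farey partition enter), the remaining analysis is routine. The passage from the scalar symbol bound to the von Neumann algebra tensor setting is costless, because $T^{\epsilon}_{s}$ decouples from $\mathcal{M}$ and $L_{2}(\mathcal{M})$ is Hilbertian.
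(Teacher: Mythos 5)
Your proposal is correct and follows essentially the same route as the paper: reduce to a uniform bound on the symbol via the Hilbert-space-valued Plancherel theorem (the $\mathcal{M}$-part decouples since $L_{2}(\mathcal{M})$ is Hilbertian), rewrite the symbol by Poisson summation as a $q^{-d}$-normalized Gauss-sum-weighted sum of Gaussians (your residue decomposition $m=q\ell+r$ gives exactly the paper's formula \eqref{20240928a}), and then combine the bound $|G|\lesssim q^{-d/2}$ (normalized) with the Farey-interval estimate $|t|<\frac{1}{q\Lambda}$, $q\le\Lambda$, $\epsilon=\Lambda^{-2}$ to get a uniform lower bound on the Gaussian decay rate and hence a bounded lattice sum. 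The only cosmetic difference is that the paper applies Poisson summation to an auxiliary function rather than splitting into residue classes, which is the same computation.
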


\begin{proof}
For all $0\leq s \leq 1$, there exists some term $\frac{a}{q}$ in the Farey sequence $\mathcal{F}_{\Lambda}$ such that $s\in I(\frac{a}{q})$. That is, $s=t+\frac{a}{q}$ for some $-\frac{\beta}{q\Lambda}\leq t \leq \frac{\alpha}{q\Lambda}$.\\
The Fourier multiplier of the operator $T^{\epsilon}_{s}$ is\\
$$\hat{K^{\epsilon}_{s}}(\xi)=\sum_{n\in \mathbb{Z}^{d}}e^{-2\pi |n|^{2}(\epsilon-it-ia/q)}e^{2\pi in\xi}.$$
Define a function $f$ on $\mathbb{R}^{d}$ as follows:\\
$$f(x):=\frac{1}{(2(\epsilon-it))^{d/2}}\sum_{\ell\in \mathbb{Z}^{d}/q\mathbb{Z}^{d}}G(\frac{a}{q},\ \ell)e^{\frac{-\pi|x-\ell/q|^{2}}{2(\epsilon-it)}},$$
where $G(\frac{a}{q},\ \ell)$ is the normalised Guass sum\\
$$G(\frac{a}{q},\ \ell)=q^{-d}\sum_{n\in \mathbb{Z}^{d}/q\mathbb{Z}^{d}}e^{2\pi i(|n|^{2}a/q+n\ell/q)},$$
an easy application of the Poisson summation formula to the function $f$ yields
\begin{equation}\label{20240928a}
\hat{K^{\epsilon}_{s}}(\xi)=\frac{1}{(2(\epsilon-it))^{d/2}}\sum_{\ell\in \mathbb{Z}^{d}}G(\frac{a}{q},\ \ell)e^{\frac{-\pi|\xi-\ell/q|^{2}}{2(\epsilon-it)}}.    
\end{equation}

Noting $|t|<\frac{1}{q\Lambda}$ and $\epsilon=\Lambda^{-2}$, we obtain
\begin{equation}\label{20240928b}
 \frac{\epsilon}{q^{2}(\epsilon^{2}+t^{2})}\geq \frac{1}{q^{2}\Lambda^{2}(\Lambda^{-4}+q^{-2}\Lambda^{-2})}\geq \frac{1}{2}\\ 
\end{equation}
and hence\\
\begin{equation*}
\begin{aligned}
|\hat{K^{\epsilon}_{s}}(\xi)|&\leq C(\epsilon+|t|)^{-d/2}q^{-d/2}\sum_{\ell\in \mathbb{Z}^{d}}e^{\frac{-\pi \epsilon|q\xi-\ell|^{2}}{2q^{2}(\epsilon^{2}+t^{2})}}\\
&\leq C(\epsilon+|t|)^{-d/2}q^{-d/2}\sum_{\ell\in \mathbb{Z}^{d}}e^{-\frac{\pi |q\xi-\ell|^{2}}{4}}\\
&\leq C(\epsilon+|t|)^{-d/2}q^{-d/2}.\\
\end{aligned}
\end{equation*}

Here we apply the estimate of the normalised Gauss sum $|G(\frac{a}{q},\ \ell)|=O(q^{-\frac{d}{2}})$. And an application of the Hilbert space-valued version Plancherel theorem completes the proof.\\

\end{proof}

\begin{corollary}\label{20240929i}
If $d\geq 5$, we have the following local spherical maximal inequality\\
$$\|\underset{\Lambda\leq \lambda\leq 2\Lambda}{\text{sup}^{+}}M_{\lambda}f\|_{L_{p}(\ell_{\infty}(\mathbb{Z}^{d})\Bar{\otimes}\mathcal{M})}\leq C\|f\|_{L_{p}(\ell_{\infty}(\mathbb{Z}^{d})\Bar{\otimes}\mathcal{M})},\ p>\frac{d}{d-2}$$
for some positive constant $C$ depending on $d$ and $p$.\\
\end{corollary}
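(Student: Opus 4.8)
The plan is to reduce the local maximal inequality to the single-frequency estimate \eqref{230919a} of the preceding proposition, combined with the integral representation
$$M_{\lambda}f = \frac{e^{2\pi\epsilon\lambda^{2}}}{r_{d}(\lambda^{2})}\int_{0}^{1}e^{-2\pi i\lambda^{2}s}\,T^{\epsilon}_{s}f\,ds,\qquad \epsilon=\Lambda^{-2},$$
and with complex interpolation against the trivial $L_{\infty}$ bound. First I would fix the dyadic block $\Lambda\le\lambda\le 2\Lambda$ and set $\epsilon=\Lambda^{-2}$, so that $r_{d}(\lambda^{2})\approx\lambda^{d-2}\approx\Lambda^{d-2}$ and $e^{2\pi\epsilon\lambda^{2}}\approx 1$ uniformly in $\lambda$. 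The point is that the $\lambda$-dependence of $M_{\lambda}f$ lies only in the scalar factor $e^{-2\pi i\lambda^{2}s}$ inside the integral; using the triangle inequality for the $L_{p}(\mathcal{M};\ell_{\infty})$ norm one obtains
$$\Big\|\sup_{\Lambda\le\lambda\le 2\Lambda}{}^{+}M_{\lambda}f\Big\|_{p}
\le \frac{C}{\Lambda^{d-2}}\int_{0}^{1}\Big\|\sup_{\Lambda\le\lambda\le2\Lambda}{}^{+}\big(e^{-2\pi i\lambda^{2}s}T^{\epsilon}_{s}f\big)\Big\|_{p}\,ds
\le \frac{C}{\Lambda^{d-2}}\int_{0}^{1}\big\|T^{\epsilon}_{s}f\big\|_{L_{p}(\ell_{\infty})}\,ds,$$
where in the last step we absorb the unimodular scalar $e^{-2\pi i\lambda^{2}s}$ (it does not affect the $\ell_{\infty}$-norm of a single operator, so the supremum over the dyadic block costs only the $\lambda$-independent bound for a fixed $s$). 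Thus everything reduces to an $L_{p}(\ell_{\infty})$-bound for the single convolution operator $T^{\epsilon}_{s}$, integrated in $s$ over $[0,1]$.

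The next step is to produce the $L_{p}$ bound for $T^{\epsilon}_{s}$ by interpolating the $L_{2}$ estimate \eqref{230919a} with the obvious contraction bound on $L_{\infty}$. On $L_{\infty}(\ell_{\infty})$ one has $\|T^{\epsilon}_{s}\|\le\sum_{m}|K^{\epsilon}_{s}(m)|=\sum_{m}e^{-2\pi\epsilon|m|^{2}}\le C\epsilon^{-d/2}=C\Lambda^{d}$, uniformly in $s$; more carefully, using the Gauss-sum expansion \eqref{20240928a} together with \eqref{20240928b} one gets the sharper multiplier bound $\|\hat K^{\epsilon}_{s}\|_{\infty}\le C q^{-d/2}(\Lambda^{-2}+|t|)^{-d/2}$ which, since $T^{\epsilon}_{s}$ is given by a Fourier multiplier acting on the $\mathcal M$-valued side (i.e.\ it acts only on the $\mathbb Z^{d}$-variable), yields the $L_{\infty}(\ell_{\infty})$ bound with the same constant $C q^{-d/2}(\Lambda^{-2}+|t|)^{-d/2}$. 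Interpolating (via the interpolation lemma for $L_{p}(\mathcal{M};\ell_{\infty})$ stated in the excerpt) between the $L_{2}$ bound \eqref{230919a} and this $L_{\infty}$ bound gives, for $2\le p\le\infty$,
$$\big\|T^{\epsilon}_{s}f\big\|_{L_{p}(\ell_{\infty})}\le C\,q^{-d/2}\big(\Lambda^{-2}+|t|\big)^{-d/2}\,\|f\|_{L_{p}(\ell_{\infty})},\qquad t=s-\tfrac aq,$$
because the $q^{-d/2}(\Lambda^{-2}+|t|)^{-d/2}$ factor appears in both endpoints (for $p<2$ a trivial $L_1\to L_1$ bound of size $C\Lambda^{d}$, hence of size $Cq^{-d/2}(\Lambda^{-2}+|t|)^{-d/2}$ again after using $\Lambda^{-2}+|t|\gtrsim q^{-1}\Lambda^{-1}$, can be used if one wants the full range $p>d/(d-2)$, but the essential case is $p\ge 2$).

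It then remains to integrate this bound over $[0,1]$ using the Farey partition $\{I(a/q)\}$: since $\bigcup_{a/q\in\mathcal F_{\Lambda}}I(a/q)=[0,1]$ and each $I(a/q)$ has length $\le C/(q\Lambda)$ and is centered at $a/q$,
$$\int_{0}^{1}\big\|T^{\epsilon}_{s}f\big\|_{L_{p}(\ell_{\infty})}\,ds
\le C\|f\|_{p}\sum_{q\le\Lambda}\sum_{\substack{1\le a\le q\\(a,q)=1}}q^{-d/2}\int_{|t|\le C/(q\Lambda)}\big(\Lambda^{-2}+|t|\big)^{-d/2}\,dt.$$
The inner integral is $O(\Lambda^{d-2})$ for $d\ge 3$ (the singular region $|t|\lesssim\Lambda^{-2}$ contributes $\Lambda^{-2}\cdot\Lambda^{d}=\Lambda^{d-2}$, and the tail $\Lambda^{-2}\le|t|\le C/(q\Lambda)$ contributes at most $C\int_{\Lambda^{-2}}^{\infty}|t|^{-d/2}dt\approx\Lambda^{d-2}$), so the $q$-sum becomes $C\Lambda^{d-2}\sum_{q\le\Lambda}q\cdot q^{-d/2}=C\Lambda^{d-2}\sum_{q\le\Lambda}q^{1-d/2}$, which converges (uniformly in $\Lambda$) precisely when $1-d/2<-1$, i.e.\ $d\ge 5$, giving a bound $\le C\Lambda^{d-2}\|f\|_{p}$. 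Multiplying by the prefactor $C\Lambda^{-(d-2)}$ from the first display cancels the $\Lambda^{d-2}$ and yields the claimed uniform-in-$\Lambda$ bound $\|\sup^{+}_{\Lambda\le\lambda\le2\Lambda}M_{\lambda}f\|_{p}\le C\|f\|_{p}$. The main obstacle — and the only place where genuine noncommutative care is needed — is the very first reduction: one must justify pulling the $L_{p}(\mathcal{M};\ell_{\infty})$-norm inside the $s$-integral and replacing $\sup^{+}_{\lambda}$ over the dyadic block by the single-operator bound. This is legitimate because $T^{\epsilon}_{s}$ acts as a completely bounded Fourier multiplier on the $\mathbb{Z}^{d}$-variable alone, hence is bounded on $L_{p}(\mathcal{M};\ell_{\infty}(\mathbb{Z}^{d}))$ with the same norm as on $L_{p}(\ell_{\infty}(\mathbb{Z}^{d})\bar\otimes\mathcal M)$, and the map $\lambda\mapsto e^{-2\pi i\lambda^{2}s}$ is a bounded scalar function; the triangle inequality for the vector-valued norm and Minkowski's integral inequality for the $L_{p}(\mathcal M;\ell_\infty)$-valued integral $\int_0^1(\cdot)\,ds$ then do the rest. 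Everything downstream is the classical Magyar--Stein--Wainger counting, which is insensitive to noncommutativity once the $L_{p}(\ell_{\infty})$ bound for $T^{\epsilon}_{s}$ is in hand.
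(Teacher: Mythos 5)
Your overall architecture matches the paper's (reduce to the operators $T^{\epsilon}_{s}$ via the integral representation, pull the maximal norm inside the $s$-integral at the cost of $C\Lambda^{2-d}$, then integrate over the Farey intervals and sum in $q$), and that reduction step is fine. The gap is in the interpolation you use to get the $L_{p}$ bound for $T^{\epsilon}_{s}$. You claim an $L_{\infty}(\ell_{\infty})$ bound for $T^{\epsilon}_{s}$ with the same constant $Cq^{-d/2}(\Lambda^{-2}+|t|)^{-d/2}$ as the $L_{2}$ estimate, on the grounds that this is a bound on the Fourier multiplier. That is not legitimate: a uniform bound on $\hat{K^{\epsilon}_{s}}$ controls only the $L_{2}$ operator norm (Plancherel); the $L_{\infty}\to L_{\infty}$ (equivalently $L_{1}\to L_{1}$) norm of the convolution is comparable to $\|K^{\epsilon}_{s}\|_{\ell_{1}}=\sum_{m}e^{-2\pi\epsilon|m|^{2}}\approx\Lambda^{d}$, which has no decay in $q$ or $t$ whatsoever (test against $f(m)=\overline{K^{\epsilon}_{s}(-m)}/|K^{\epsilon}_{s}(-m)|$). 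Your parenthetical fix for $p<2$ has the inequality backwards as well: $q^{-d/2}(\Lambda^{-2}+|t|)^{-d/2}\leq C\Lambda^{d}$, not the reverse, so the trivial $L_{1}$ bound cannot be "upgraded" to the decaying one. As written, your argument therefore proves nothing for any $p\neq 2$, and in particular it never produces the range $p>\frac{d}{d-2}$, which is the whole content of the corollary.

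The correct route (and the paper's) is to interpolate the $L_{2}$ estimate \eqref{230919a} with the honest $L_{1}$ bound $\|T^{\epsilon}_{s}\|_{L_{1}\to L_{1}}\leq C\epsilon^{-d/2}=C\Lambda^{d}$ coming from noncommutative Young. For $\frac{1}{p}=\frac{\theta}{2}+(1-\theta)$ this gives the partially decaying bound $Cq^{-\frac{d\theta}{2}}(\Lambda^{-2}+|t|)^{-\frac{d\theta}{2}}\Lambda^{d(1-\theta)}$; the loss $\Lambda^{d(1-\theta)}$ is then exactly absorbed by the $t$-integration over $I(\frac{a}{q})$, since $\int(\Lambda^{-2}+|t|)^{-\frac{d\theta}{2}}dt\leq C\Lambda^{d\theta-2}$ and $\Lambda^{d(1-\theta)}\Lambda^{d\theta-2}=\Lambda^{d-2}$ cancels the prefactor $\Lambda^{2-d}$. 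The restriction $p>\frac{d}{d-2}$ then appears precisely as the summability condition $\sum_{q\leq\Lambda}q^{1-\frac{d\theta}{2}}<\infty$, i.e.\ $\theta>\frac{4}{d}$. Finally, the case $p>2$ is not obtained by interpolating at the level of $T^{\epsilon}_{s}$ but at the level of the maximal inequality itself, against the trivial $p=\infty$ bound for $\sup^{+}_{\Lambda\leq\lambda\leq2\Lambda}M_{\lambda}$, using the interpolation lemma for $L_{p}(\mathcal{M};\ell_{\infty})$. With these replacements your outline becomes the paper's proof; without them the key estimate is unsupported.
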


\begin{proof}
By noncommutative Young's inequality, we have\\
$$\|T^{\epsilon}_{s}f\|_{L_{1}(\ell_{\infty}(\mathbb{Z}^{d})\Bar{\otimes}\mathcal{M})}\leq \|K^{\epsilon}_{s}\|_{\ell_{1}(\mathbb{Z}^{d})}\|f\|_{L_{1}(\ell_{\infty}(\mathbb{Z}^{d})\Bar{\otimes}\mathcal{M})}\leq C\epsilon^{-d/2}\|f\|_{L_{1}(\ell_{\infty}(\mathbb{Z}^{d})\Bar{\otimes}\mathcal{M})}.$$
Let $\epsilon=\Lambda^{-2}$, then we obtain, via an easy interpolation with the inequality \eqref{230919a},\\
$$\|T^{\epsilon}_{s}f\|_{L_{p}(\ell_{\infty}(\mathbb{Z}^{d})\Bar{\otimes}\mathcal{M})}\leq Cq^{-\frac{d\theta}{2}}(\Lambda^{-2}+|t|)^{-\frac{d\theta}{2}}\Lambda^{d(1-\theta)}\|f\|_{L_{p}(\ell_{\infty}(\mathbb{Z}^{d})\Bar{\otimes}\mathcal{M})},$$
where $t=s-\frac{a}{q}$ with $s\in I(\frac{a}{q})$, $1\leq p\leq 2$ and $0\leq \theta \leq 1$ with $\frac{1}{p}=\frac{\theta}{2}+(1-\theta)$.

When $2\geq p>\frac{d}{d-2}$, or equivalently $1\geq \theta>\frac{4}{d}$, we have\\
\begin{equation}
    \begin{aligned}
        \int_{I(\frac{a}{q})}\|T^{\epsilon}_{s}f\|_{L_{p}(\ell_{\infty}(\mathbb{Z}^{d})\Bar{\otimes}\mathcal{M})}ds&\leq Cq^{-\frac{d\theta}{2}}\Lambda^{d(1-\theta)}\|f\|_{L_{p}(\ell_{\infty}(\mathbb{Z}^{d})\Bar{\otimes}\mathcal{M})}\int_{I(\frac{a}{q})}(\Lambda^{-2}+|t|)^{-\frac{d\theta}{2}}dt\\
        &\leq Cq^{-\frac{d\theta}{2}}\Lambda^{d(1-\theta)}\Lambda^{d\theta-2}\|f\|_{L_{p}(\ell_{\infty}(\mathbb{Z}^{d})\Bar{\otimes}\mathcal{M})}\\
        &\leq Cq^{-\frac{d\theta}{2}}\Lambda^{d-2}\|f\|_{L_{p}(\ell_{\infty}(\mathbb{Z}^{d})\Bar{\otimes}\mathcal{M})}.
    \end{aligned}
\end{equation}
Therefore,
\begin{equation*}
    \begin{aligned}
        &\|\underset{\Lambda\leq \lambda\leq 2\Lambda}{\text{sup}^{+}}M_{\lambda}f\|_{L_{p}(\ell_{\infty}(\mathbb{Z}^{d})\Bar{\otimes}\mathcal{M})}=\|\underset{\Lambda\leq \lambda\leq 2\Lambda}{\text{sup}^{+}}\frac{e^{2\pi \epsilon \lambda^{2}}}{r_{d}(\lambda^{2})}\int_{0}^{1}e^{-2\pi i\lambda^{2}s}T^{\epsilon}_{s}fds\|_{L_{p}(\ell_{\infty}(\mathbb{Z}^{d})\Bar{\otimes}\mathcal{M})}\\
        &\leq {\sup_{\Lambda\leq \lambda\leq 2\Lambda}}|\frac{e^{2\pi \epsilon \lambda^{2}}}{r_{d}(\lambda^{2})}|\|\underset{\Lambda\leq \lambda\leq 2\Lambda}{\text{sup}^{+}}\int_{0}^{1}e^{-2\pi i\lambda^{2}s}T^{\epsilon}_{s}fds\|_{L_{p}(\ell_{\infty}(\mathbb{Z}^{d})\Bar{\otimes}\mathcal{M})}\\
        &\leq C\Lambda^{2-d}\int_{0}^{1}\|\underset{\Lambda\leq \lambda\leq 2\Lambda}{\text{sup}^{+}}e^{-2\pi i\lambda^{2}s}T^{\epsilon}_{s}f\|_{L_{p}(\ell_{\infty}(\mathbb{Z}^{d})\Bar{\otimes}\mathcal{M})}ds\\
        &\leq C\Lambda^{2-d}\int_{0}^{1}{\sup_{\Lambda\leq \lambda\leq 2\Lambda}}|e^{-2\pi i\lambda^{2}s}|\|T^{\epsilon}_{s}f\|_{L_{p}(\ell_{\infty}(\mathbb{Z}^{d})\Bar{\otimes}\mathcal{M})}ds\\
        &\leq C\Lambda^{2-d}\sum_{1\leq q\leq \Lambda}\sum_{0\leq a\leq q}\int_{I(\frac{a}{q})}\|T^{\epsilon}_{s}f\|_{L_{p}(\ell_{\infty}(\mathbb{Z}^{d})\Bar{\otimes}\mathcal{M})}ds\\
        &\leq C\Lambda^{2-d}\sum_{1\leq q\leq \Lambda}\sum_{0\leq a\leq q}q^{-\frac{d\theta}{2}}\Lambda^{d-2}\|f\|_{L_{p}(\ell_{\infty}(\mathbb{Z}^{d})\Bar{\otimes}\mathcal{M})}\\
        &\leq C\sum_{1\leq q\leq \Lambda}q^{1-\frac{d\theta}{2}}\|f\|_{L_{p}(\ell_{\infty}(\mathbb{Z}^{d})\Bar{\otimes}\mathcal{M})}\leq C\|f\|_{L_{p}(\ell_{\infty}(\mathbb{Z}^{d})\Bar{\otimes}\mathcal{M})}.\\
    \end{aligned}
\end{equation*}
It needs to be pointed out that the constant $C$ in the last inequality depends on $d$ and $p$.

Since the corresponding estimate for $p=\infty$ is trivial, the case $p>2$ follows by interpolation.

\end{proof}

\section{\textbf{The decomposition of $M_{\lambda}$ and approximations}}\label{sec4}
In this section, we shall decompose and approximate the spherical average operator $M_{\lambda}$ via Magyar-Stein-Wainger's method.

The multiplier of the operator $M_{\lambda}$ can be expressed as:\\
\begin{equation*}
\begin{aligned}
m_{\lambda}(\xi)&= \frac{1}{r_{d}(\lambda^{2})}\sum_{|n|=\lambda}e^{2\pi in\xi}=\frac{1}{r_{d}(\lambda^{2})}\sum_{n\in \mathbb{Z}^{d}}e^{2\pi in\xi}\int_{0}^{1}e^{2\pi i(|n|^{2}-\lambda^{2})s}ds\\
&=\frac{e^{2\pi \epsilon \lambda^{2}}}{r_{d}(\lambda^{2})}\sum_{n\in \mathbb{Z}^{d}}e^{-2\pi \epsilon |n|^{2}}e^{2\pi in\xi}\int_{0}^{1}e^{2\pi i(|n|^{2}-\lambda^{2})s}ds.
\end{aligned}
\end{equation*}
We have the following decomposition via the circle method:
$$m_{\lambda}(\xi)=\sum_{\frac{a}{q}\in \mathcal{F}_{\Lambda}}m_{\lambda}^{\frac{a}{q}}(\xi),$$
where\\
$$m_{\lambda}^{\frac{a}{q}}(\xi):=\frac{e^{2\pi \epsilon \lambda^{2}}}{r_{d}(\lambda^{2})}\int_{I(\frac{a}{q})}e^{-2\pi i\lambda^{2}s}\hat{K^{\epsilon}_{s}}(\xi)ds.$$
Substituting the formula \eqref{20240928a}, we have\\
$$m_{\lambda}^{\frac{a}{q}}(\xi)=e^{-2\pi i\lambda^{2}\frac{a}{q}}\sum_{\ell\in\mathbb{Z}^{d}}G(\frac{a}{q},\ \ell)J^{a/q}_{\lambda}(\xi-\frac{\ell}{q}),$$
where\\
$$J^{a/q}_{\lambda}(\xi)=\frac{e^{2\pi \epsilon \lambda^{2}}}{r_{d}(\lambda^{2})}\int_{\Bar{I}(\frac{a}{q})}\frac{e^{-2\pi i\lambda^{2}t}}{(2(\epsilon-it))^{d/2}}e^{\frac{-\pi|\xi|^{2}}{2(\epsilon-it)}}dt$$
with $\Bar{I}(\frac{a}{q}):=I(\frac{a}{q})-\frac{a}{q}$.
Define $Q:=\{\xi\in \mathbb{R}^{d}:\ -\frac{1}{2}<\xi_{i}\leq \frac{1}{2},\ 1\leq i\leq d\}$, and let $\phi$ be a $C^{\infty}$ cut-off function supported in the cube $\frac{Q}{2}$ and $\phi=1$ on $\frac{Q}{4}$. Define\\
$$l_{\lambda}^{\frac{a}{q}}(\xi)=e^{-2\pi i\lambda^{2}\frac{a}{q}}\sum_{\ell\in\mathbb{Z}^{d}}G(\frac{a}{q},\ \ell)\phi_{q}(\xi-\frac{\ell}{q})J^{a/q}_{\lambda}(\xi-\frac{\ell}{q}),$$
where $\phi_{q}(\xi)=\phi(q\xi)$, and define\\
$$n_{\lambda}^{\frac{a}{q}}(\xi)=e^{-2\pi i\lambda^{2}\frac{a}{q}}\sum_{\ell\in\mathbb{Z}^{d}}G(\frac{a}{q},\ \ell)\phi_{q}(\xi-\frac{\ell}{q})J_{\lambda}(\xi-\frac{\ell}{q}),$$
where\\
$$J_{\lambda}(\xi):=\frac{e^{2\pi \epsilon \lambda^{2}}}{r_{d}(\lambda^{2})}\int^{\infty}_{-\infty}\frac{e^{-2\pi i\lambda^{2}t}}{(2(\epsilon-it))^{d/2}}e^{\frac{-\pi|\xi|^{2}}{2(\epsilon-it)}}dt.$$
Let $M_{\lambda}^{\frac{a}{q}},\ L_{\lambda}^{\frac{a}{q}}$ and $N_{\lambda}^{\frac{a}{q}}$ be the convolution operators acting on $\mathcal{M}$-valued Schwartz functions on $\mathbb{Z}^{d}$, whose Fourier multipliers are $m_{\lambda}^{\frac{a}{q}},\ l_{\lambda}^{\frac{a}{q}}$ and $n_{\lambda}^{\frac{a}{q}}$, respectively. The following proposition tells us that the operator $N_{\lambda}^{\frac{a}{q}}$ is an adequate approximation of $M_{\lambda}^{\frac{a}{q}}$.\\

\begin{proposition}\label{20240929j}
    Let $\epsilon=\Lambda^{-2}$, then the operators $M_{\lambda}^{\frac{a}{q}},\ L_{\lambda}^{\frac{a}{q}}$ and $N_{\lambda}^{\frac{a}{q}}$ defined above satisfy:\\
$$(1)\ \sum_{\frac{a}{q}\in \mathcal{F}_{\Lambda}}\|\underset{\Lambda\leq \lambda\leq 2\Lambda}{\text{sup}^{+}}(M_{\lambda}^{\frac{a}{q}}-L_{\lambda}^{\frac{a}{q}})f\|_{L_{2}(\ell_{\infty}(\mathbb{Z}^{d})\Bar{\otimes} \mathcal{M})}\leq C\Lambda^{2-\frac{d}{2}}\|f\|_{L_{2}(\ell_{\infty}(\mathbb{Z}^{d})\Bar{\otimes} \mathcal{M})},$$
$$(2)\ \sum_{\frac{a}{q}\in \mathcal{F}_{\Lambda}}\|\underset{\Lambda\leq \lambda\leq 2\Lambda}{\text{sup}^{+}}(L_{\lambda}^{\frac{a}{q}}-N_{\lambda}^{\frac{a}{q}})f\|_{L_{2}(\ell_{\infty}(\mathbb{Z}^{d})\Bar{\otimes} \mathcal{M})}\leq C\Lambda^{2-\frac{d}{2}}\|f\|_{L_{2}(\ell_{\infty}(\mathbb{Z}^{d})\Bar{\otimes} \mathcal{M})}.$$
\end{proposition}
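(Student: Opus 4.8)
Both inequalities will be obtained by the same device, whose point is that the supremum over $\lambda$ can be absorbed at essentially no cost. Put $c_\lambda:=\frac{e^{2\pi\epsilon\lambda^2}}{r_d(\lambda^2)}$ and $t:=s-\frac aq$. By the very definition of $m_\lambda^{\frac aq}$ in Section \ref{sec4} we have $M^{\frac aq}_\lambda f=c_\lambda\int_{I(\frac aq)}e^{-2\pi i\lambda^2 s}T^\epsilon_s f\,ds$; inserting \eqref{20240928a} into the definitions of $l^{\frac aq}_\lambda$ and $n^{\frac aq}_\lambda$ and substituting $s=t+\frac aq$ one likewise rewrites, for every $\lambda$,
\begin{equation*}
L^{\frac aq}_\lambda f=c_\lambda\int_{I(\frac aq)}e^{-2\pi i\lambda^2 s}\widetilde L_s f\,ds,\qquad N^{\frac aq}_\lambda f=c_\lambda\int_{\R}e^{-2\pi i\lambda^2 s}\widetilde L_s f\,ds,
\end{equation*}
where $\widetilde L_s$ is the $\Z^d$-convolution operator (acting on $\mathcal M$-valued functions) with multiplier $\frac{1}{(2(\epsilon-it))^{d/2}}\sum_{\ell\in\Z^d}G(\frac aq,\ell)\phi_q(\xi-\frac\ell q)e^{\frac{-\pi|\xi-\ell/q|^2}{2(\epsilon-it)}}$. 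Since this multiplier is bounded by $Cq^{-d/2}(\epsilon^2+t^2)^{-d/4}$, the operator $\widetilde L_s$ is bounded on $L_2(\ell_\infty(\Z^d)\Bar{\otimes}\mathcal M)$ for each $s$, and $\int_\R\|\widetilde L_s\|\,ds<\infty$ because $d>2$, so the integral defining $N^{\frac aq}_\lambda$ converges. Hence
\begin{equation*}
(M^{\frac aq}_\lambda-L^{\frac aq}_\lambda)f=c_\lambda\int_{I(\frac aq)}e^{-2\pi i\lambda^2 s}(T^\epsilon_s-\widetilde L_s)f\,ds,\qquad (L^{\frac aq}_\lambda-N^{\frac aq}_\lambda)f=-c_\lambda\int_{\R\setminus I(\frac aq)}e^{-2\pi i\lambda^2 s}\widetilde L_s f\,ds.
\end{equation*}

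Next I would remove the $\text{sup}^{+}$ in $\lambda$. All the $\lambda$-dependence now lies in the scalar $c_\lambda e^{-2\pi i\lambda^2 s}$, whose modulus is $c_\lambda\leq\sup_{\Lambda\leq\lambda\leq2\Lambda}\frac{e^{2\pi\epsilon\lambda^2}}{r_d(\lambda^2)}\leq C\Lambda^{2-d}$, using $\epsilon=\Lambda^{-2}$ and $r_d(\lambda^2)\approx\lambda^{d-2}$ for $d\geq5$. Combining Minkowski's integral inequality for the norm of $L_2(\ell_\infty(\Z^d)\Bar{\otimes}\mathcal M;\ell_\infty)$ with the elementary fact that $\|\text{sup}^{+}_\lambda(\alpha_\lambda h)\|_2\leq(\sup_\lambda|\alpha_\lambda|)\|h\|_2$ for scalars $\alpha_\lambda$ and a fixed $h\in L_2(\ell_\infty(\Z^d)\Bar{\otimes}\mathcal M)$ (take $a=u|h|^{1/2}$, $y_\lambda=\alpha_\lambda$, $b=|h|^{1/2}$ in the factorisation of the maximal norm, $h=u|h|$ being the polar decomposition), one obtains
\begin{equation*}
\Big\|\underset{\Lambda\leq\lambda\leq2\Lambda}{\text{sup}^{+}}(M^{\frac aq}_\lambda-L^{\frac aq}_\lambda)f\Big\|_{L_2(\ell_\infty(\Z^d)\Bar{\otimes}\mathcal M)}\leq C\Lambda^{2-d}\int_{I(\frac aq)}\|(T^\epsilon_s-\widetilde L_s)f\|_{L_2(\ell_\infty(\Z^d)\Bar{\otimes}\mathcal M)}\,ds,
\end{equation*}
and the analogous bound for $(L^{\frac aq}_\lambda-N^{\frac aq}_\lambda)$ with $\widetilde L_s$ and $\R\setminus I(\frac aq)$ in place of $T^\epsilon_s-\widetilde L_s$ and $I(\frac aq)$. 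By the Hilbert space-valued Plancherel theorem $\|U_s f\|_{L_2}\leq\|\widehat{U_s}\|_{L_\infty(\TT^d)}\|f\|_{L_2}$, so matters reduce to estimating two explicit multipliers and integrating in $s$.

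By \eqref{20240928a} the multiplier of $T^\epsilon_s-\widetilde L_s$ is $\frac{1}{(2(\epsilon-it))^{d/2}}\sum_\ell G(\frac aq,\ell)(1-\phi_q(\xi-\frac\ell q))e^{\frac{-\pi|\xi-\ell/q|^2}{2(\epsilon-it)}}$. Using $|G(\frac aq,\ell)|=O(q^{-d/2})$, the pairwise disjointness of the cubes $\operatorname{supp}\phi_q(\cdot-\frac\ell q)$, the bound $|\xi-\frac\ell q|\gtrsim q^{-1}$ on $\operatorname{supp}(1-\phi_q(\cdot-\frac\ell q))$, and the identity $\operatorname{Re}\frac{1}{2(\epsilon-it)}=\frac{\epsilon}{2(\epsilon^2+t^2)}$, which is nonnegative and at least $\tfrac14q^2$ on $\bar I(\frac aq)$ by \eqref{20240928b}, a Gaussian lattice sum gives, on $I(\frac aq)$,
\begin{equation*}
\|\widehat{T^\epsilon_s-\widetilde L_s}\|_{L_\infty}\leq Cq^{-d/2}(\epsilon^2+t^2)^{-d/4}e^{-c\epsilon/(q^2(\epsilon^2+t^2))},\qquad \|\widehat{\widetilde L_s}\|_{L_\infty}\leq Cq^{-d/2}(\epsilon^2+t^2)^{-d/4}.
\end{equation*}
For (1): since $|t|\leq(q\Lambda)^{-1}$ on $\bar I(\frac aq)$, the change of variables $v=\epsilon/(\epsilon^2+t^2)$ (which then runs over $[cq^2,\Lambda^2]$) turns $\int_{I(\frac aq)}\|\widehat{T^\epsilon_s-\widetilde L_s}\|_{L_\infty}\,ds$ into $\asymp q^{-d/2}\Lambda^{d/2-1}\int_{cq^2}^{\Lambda^2}v^{d/4-3/2}e^{-cv/q^2}\,dv$, which is $\lesssim q^{-d/2}\Lambda^{d/2-1}q^{d/2-1}=q^{-1}\Lambda^{d/2-1}$ because the $v$-integral is $\leq q^{d/2-1}\int_1^\infty w^{d/4-3/2}e^{-cw}\,dw<\infty$. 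For (2): since $\epsilon\leq(q\Lambda)^{-1}\leq|t|$ on $\R\setminus\bar I(\frac aq)$ one has $(\epsilon^2+t^2)^{-d/4}\asymp|t|^{-d/2}$ there, so $\int_{\R\setminus I(\frac aq)}\|\widehat{\widetilde L_s}\|_{L_\infty}\,ds\lesssim q^{-d/2}\int_{|t|\geq c(q\Lambda)^{-1}}|t|^{-d/2}\,dt\lesssim q^{-d/2}(q\Lambda)^{d/2-1}=q^{-1}\Lambda^{d/2-1}$, the integral converging since $d>2$. In either case each summand is $\leq C\Lambda^{2-d}q^{-1}\Lambda^{d/2-1}\|f\|_2=Cq^{-1}\Lambda^{1-d/2}\|f\|_2$, and since $\mathcal F_\Lambda$ contains at most $q$ fractions of denominator $q$ and $q\leq\Lambda$,
\begin{equation*}
\sum_{\frac aq\in\mathcal F_\Lambda}\Big\|\underset{\Lambda\leq\lambda\leq2\Lambda}{\text{sup}^{+}}(M^{\frac aq}_\lambda-L^{\frac aq}_\lambda)f\Big\|_{L_2}\leq C\sum_{1\leq q\leq\Lambda}q\cdot q^{-1}\Lambda^{1-d/2}\|f\|_2\leq C\Lambda^{2-d/2}\|f\|_2,
\end{equation*}
and the same estimate for $\sum_{\frac aq}\|\text{sup}^{+}(L^{\frac aq}_\lambda-N^{\frac aq}_\lambda)f\|_2$; these are the two assertions.

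The step I expect to be the main obstacle is the handling of the $\lambda$-supremum: there are $\asymp\Lambda^2$ admissible values of $\lambda$ with $\lambda^2\in[\Lambda^2,4\Lambda^2]$, so controlling $\|\text{sup}^{+}_\lambda\|$ by an $\ell_2$- or $\ell_1$-sum over $\lambda$ loses a power of $\Lambda$ and overshoots $\Lambda^{2-d/2}$; the integral representation above is exactly what lets the supremum be absorbed at no cost, since the $\lambda$-dependence is then a single scalar of modulus $\leq C\Lambda^{2-d}$. The remaining delicate point, in (1), is to extract the gain $(q/\Lambda)^{d/2-1}$ over the crude estimate $\int_{I(\frac aq)}(\epsilon^2+t^2)^{-d/4}\,ds\asymp\Lambda^{d-2}$: this is achieved by exploiting that $\operatorname{Re}\frac{1}{2(\epsilon-it)}$ is as large as $\asymp\Lambda^2$ when $|t|\lesssim\epsilon$, while $|\xi-\frac\ell q|\gtrsim q^{-1}$ on $\operatorname{supp}(1-\phi_q)$, so that the Gaussian factor suppresses precisely the dangerous small-$t$ part of the $s$-integral; in (2) the analogous gain is just the convergence of $\int|t|^{-d/2}\,dt$ over $|t|\gtrsim(q\Lambda)^{-1}$, which is where $d\geq5$ enters together with the summability exponent $2-d/2$. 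The Poisson summation formula, the Gauss-sum bound, and the lattice sums are as in the classical Magyar-Stein-Wainger argument.
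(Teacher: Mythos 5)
Your proposal is correct and follows essentially the same route as the paper: the same integral representations of $M^{a/q}_\lambda-L^{a/q}_\lambda$ and $L^{a/q}_\lambda-N^{a/q}_\lambda$, the same absorption of the $\lambda$-supremum through the scalar factor $\sup_{\Lambda\le\lambda\le2\Lambda}|e^{2\pi\epsilon\lambda^2}e^{-2\pi i\lambda^2 s}/r_d(\lambda^2)|\lesssim\Lambda^{2-d}$ combined with Minkowski's inequality, and the same multiplier bounds coming from $|G(\frac aq,\ell)|\lesssim q^{-d/2}$ together with the support restriction $|\xi-\frac{\ell}{q}|\gtrsim q^{-1}$ in part (1) and the tail integral $\int_{|t|\gtrsim (q\Lambda)^{-1}}|t|^{-d/2}\,dt$ in part (2). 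The only cosmetic difference is in part (1): the paper weakens the off-center lattice-sum decay to the power $\bigl(\epsilon/(q^{2}(\epsilon^{2}+t^{2}))\bigr)^{-d/4}$, so the integrand becomes the constant $\Lambda^{d/2}$ and the Farey sum is closed via $\sum_{a/q}|\bar I(\frac aq)|=1$, whereas you keep the exponential factor and change variables to get the per-fraction bound $q^{-1}\Lambda^{1-d/2}$ and then sum over $a$ and $q\le\Lambda$ — both give the same result.
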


\begin{proof}
(1) Let $R^{\frac{a}{q}}_{t}$ be the convolution operators acting on $\mathcal{M}$-valued Schwartz functions on $\mathbb{Z}^{d}$, whose Fourier multiplier is given as follows:\\
$$r^{\frac{a}{q}}_{t}(\xi)=\sum_{\ell\in \mathbb{Z}^{d}}G(\frac{a}{q},\ \ell)(1-\phi_{q}(\xi-\frac{\ell}{q}))e^{\frac{-\pi|\xi-\frac{\ell}{q}|^{2}}{2(\epsilon-it)}}.$$
Since each term in the sum is supported where $|\xi-\frac{\ell}{q}|\geq \frac{c}{q}$ for some $c>0$, we have\\
\begin{equation*}
    \begin{aligned}
       |r^{\frac{a}{q}}_{t}(\xi)|&\leq Cq^{-\frac{d}{2}}\sum_{\ell\in \mathbb{Z}^{d},\ |\ell-q\xi|\geq c}e^{\frac{-\pi\epsilon|q\xi-\ell|^{2}}{2q^{2}(\epsilon^{2}+t^{2})}}\\
       &\leq Cq^{-\frac{d}{2}}(\frac{\epsilon}{q^{2}(\epsilon^{2}+t^{2})})^{-\frac{d}{2}}\\
       &\leq Cq^{-\frac{d}{2}}(\frac{\epsilon}{q^{2}(\epsilon^{2}+t^{2})})^{-\frac{d}{4}}\leq C(\frac{\epsilon}{\epsilon^{2}+t^{2}})^{-\frac{d}{4}},\\
    \end{aligned}
\end{equation*}
where the third inequality follows from the estimate \eqref{20240928b} when $\epsilon=\Lambda^{-2}$.
Therefore,
$$\|R^{\frac{a}{q}}_{t}f\|_{L_{2}(\ell_{\infty}(\mathbb{Z}^{d})\Bar{\otimes} \mathcal{M})}\leq C(\frac{\epsilon}{\epsilon^{2}+t^{2}})^{-\frac{d}{4}}\|f\|_{L_{2}(\ell_{\infty}(\mathbb{Z}^{d})\Bar{\otimes} \mathcal{M})}.$$
Noting that
\begin{equation*}
   (M_{\lambda}^{\frac{a}{q}}-L_{\lambda}^{\frac{a}{q}})f=\frac{e^{2\pi \epsilon \lambda^{2}}e^{-2\pi i\lambda^{2}\frac{a}{q}}}{r_{d}(\lambda^{2})}\int_{\Bar{I}(\frac{a}{q})}\frac{e^{-2\pi i\lambda^{2}t}}{(2(\epsilon-it))^{d/2}}R^{\frac{a}{q}}_{t}fdt,
\end{equation*}
we obtain\\
\begin{equation*}
    \begin{aligned}
        &\|\underset{\Lambda\leq \lambda\leq 2\Lambda}{\text{sup}^{+}}(M_{\lambda}^{\frac{a}{q}}-L_{\lambda}^{\frac{a}{q}})f\|_{L_{2}(\ell_{\infty}(\mathbb{Z}^{d})\Bar{\otimes} \mathcal{M})}\\
        &\leq {\sup_{\Lambda\leq \lambda\leq 2\Lambda}}|\frac{e^{2\pi \epsilon \lambda^{2}}e^{-2\pi i\lambda^{2}\frac{a}{q}}}{r_{d}(\lambda^{2})}|\|\underset{\Lambda\leq \lambda\leq 2\Lambda}{\text{sup}^{+}}\int_{\Bar{I}(\frac{a}{q})}\frac{e^{-2\pi i\lambda^{2}t}}{(2(\epsilon-it))^{d/2}}R^{\frac{a}{q}}_{t}fdt\|_{L_{2}(\ell_{\infty}(\mathbb{Z}^{d})\Bar{\otimes} \mathcal{M})}\\
        &\leq C\Lambda^{2-d}\int_{\Bar{I}(\frac{a}{q})}{\sup_{\Lambda\leq \lambda\leq 2\Lambda}}|\frac{e^{-2\pi i\lambda^{2}t}}{(2(\epsilon-it))^{d/2}}|\|R^{\frac{a}{q}}_{t}f\|_{L_{2}(\ell_{\infty}(\mathbb{Z}^{d})\Bar{\otimes} \mathcal{M})}dt\\
        &\leq C\Lambda^{2-d}\int_{\Bar{I}(\frac{a}{q})}(\epsilon^{2}+t^{2})^{-\frac{d}{4}}(\frac{\epsilon}{\epsilon^{2}+t^{2}})^{-\frac{d}{4}}dt\|f\|_{L_{2}(\ell_{\infty}(\mathbb{Z}^{d})\Bar{\otimes} \mathcal{M})}\\
        &=C\Lambda^{2-\frac{d}{2}}|\Bar{I}(\frac{a}{q})|\|f\|_{L_{2}(\ell_{\infty}(\mathbb{Z}^{d})\Bar{\otimes} \mathcal{M})}.\\
    \end{aligned}
\end{equation*}
A summation of $a\ \text{and}\ q$ over $\mathcal{F}_{\Lambda}$ completes the proof.

(2) Let $S^{\frac{a}{q}}_{t}$ be the convolution operators acting on $\mathcal{M}$-valued Schwartz functions on $\mathbb{Z}^{d}$, whose Fourier multiplier is given as follows:\\
$$s^{\frac{a}{q}}_{t}(\xi)=\sum_{\ell\in \mathbb{Z}^{d}}G(\frac{a}{q},\ \ell)\phi_{q}(\xi-\frac{\ell}{q})e^{\frac{-\pi|\xi-\frac{\ell}{q}|^{2}}{2(\epsilon-it)}}.$$
Noting that only one term in the sum above, for every $\xi\in \mathbb{R}^{d}$, is nonzero, we have $|s^{\frac{a}{q}}_{t}(\xi)|\leq Cq^{-\frac{d}{2}}$ and hence\\
$$\|S^{\frac{a}{q}}_{t}f\|_{L_{2}(\ell_{\infty}(\mathbb{Z}^{d})\Bar{\otimes} \mathcal{M})}\leq Cq^{-\frac{d}{2}}\|f\|_{L_{2}(\ell_{\infty}(\mathbb{Z}^{d})\Bar{\otimes} \mathcal{M})}.$$
Now observe that\\
\begin{equation*}
   (L_{\lambda}^{\frac{a}{q}}-N_{\lambda}^{\frac{a}{q}})f=\frac{e^{2\pi \epsilon \lambda^{2}}e^{-2\pi i\lambda^{2}\frac{a}{q}}}{r_{d}(\lambda^{2})}\int_{\mathbb{R}\setminus \Bar{I}(\frac{a}{q})}\frac{e^{-2\pi i\lambda^{2}t}}{(2(\epsilon-it))^{d/2}}S^{\frac{a}{q}}_{t}fdt.
\end{equation*}
As a result,\\
\begin{equation*}
    \begin{aligned}
        &\|\underset{\Lambda\leq \lambda\leq 2\Lambda}{\text{sup}^{+}}(L_{\lambda}^{\frac{a}{q}}-N_{\lambda}^{\frac{a}{q}})f\|_{L_{2}(\ell_{\infty}(\mathbb{Z}^{d})\Bar{\otimes} \mathcal{M})}\\
        &\leq {\sup_{\Lambda\leq \lambda\leq 2\Lambda}}|\frac{e^{2\pi \epsilon \lambda^{2}}e^{-2\pi i\lambda^{2}\frac{a}{q}}}{r_{d}(\lambda^{2})}|\|\underset{\Lambda\leq \lambda\leq 2\Lambda}{\text{sup}^{+}}\int_{\mathbb{R}\setminus \Bar{I}(\frac{a}{q})}\frac{e^{-2\pi i\lambda^{2}t}}{(2(\epsilon-it))^{d/2}}S^{\frac{a}{q}}_{t}fdt\|_{L_{2}(\ell_{\infty}(\mathbb{Z}^{d})\Bar{\otimes} \mathcal{M})}\\
        &\leq C\Lambda^{2-d}\int_{\mathbb{R}\setminus \Bar{I}(\frac{a}{q})}{\sup_{\Lambda\leq \lambda\leq 2\Lambda}}|\frac{e^{-2\pi i\lambda^{2}t}}{(2(\epsilon-it))^{d/2}}|\|S^{\frac{a}{q}}_{t}f\|_{L_{2}(\ell_{\infty}(\mathbb{Z}^{d})\Bar{\otimes} \mathcal{M})}dt\\
        &\leq C\Lambda^{2-d}\int_{\mathbb{R}\setminus \Bar{I}(\frac{a}{q})}(\epsilon^{2}+t^{2})^{-\frac{d}{4}}q^{-\frac{d}{2}}dt\|f\|_{L_{2}(\ell_{\infty}(\mathbb{Z}^{d})\Bar{\otimes} \mathcal{M})}\\
        &\leq C\Lambda^{2-d}\int_{|t|\geq \frac{c}{q\Lambda}}|t|^{-\frac{d}{2}}q^{-\frac{d}{2}}dt\|f\|_{L_{2}(\ell_{\infty}(\mathbb{Z}^{d})\Bar{\otimes} \mathcal{M})}\\
        &\leq C\Lambda^{1-\frac{d}{2}}q^{-1}\|f\|_{L_{2}(\ell_{\infty}(\mathbb{Z}^{d})\Bar{\otimes} \mathcal{M})}.\\
    \end{aligned}
\end{equation*}
Therefore,\\
\begin{equation*}
\begin{aligned}
    &\sum_{\frac{a}{q}\in \mathcal{F}_{\Lambda}}\|\underset{\Lambda\leq \lambda\leq 2\Lambda}{\text{sup}^{+}}(L_{\lambda}^{\frac{a}{q}}-N_{\lambda}^{\frac{a}{q}})f\|_{L_{2}(\ell_{\infty}(\mathbb{Z}^{d})\Bar{\otimes} \mathcal{M})}\leq C\Lambda^{1-\frac{d}{2}}\|f\|_{L_{2}(\ell_{\infty}(\mathbb{Z}^{d})\Bar{\otimes} \mathcal{M})}\sum_{\frac{a}{q}\in \mathcal{F}_{\Lambda}}q^{-1}\\
    &\leq C\Lambda^{2-\frac{d}{2}}\|f\|_{L_{2}(\ell_{\infty}(\mathbb{Z}^{d})\Bar{\otimes} \mathcal{M})}.\\
\end{aligned}
\end{equation*}
\end{proof}

\section{\textbf{The estimate of $N^{\frac{a}{q}}_{\lambda}$}}\label{sec5}
In this section, we shall give a proper estimate of $N^{\frac{a}{q}}_{\lambda}$ on the noncommutative space $L_{p}(\ell_{\infty}(\mathbb{Z}^{d})\Bar{\otimes}\mathcal{M})$ via a noncommutative sampling principle associated with noncommutative maximal norms.

It follows from \cite[Lemma 6.1]{MSW02} that $J_{\lambda}(\xi)=\frac{c_{d}\lambda^{d-2}}{r_{d}(\lambda^{2})}\hat{\sigma}_{\lambda}(\xi)$, where $c_{d}=\frac{\pi^{\frac{d}{2}}}{\Gamma(\frac{d}{2})}$ and $\hat{\sigma}_{\lambda}(\xi)$ is the Fourier transform of the normalised invariant measure $\sigma_{\lambda}$ supported on the sphere $\{x\in \mathbb{R}^{d}:\ |x|=\lambda\}$. As a result,
$$n_{\lambda}^{\frac{a}{q}}(\xi)=\frac{c_{d}\lambda^{d-2}}{r_{d}(\lambda^{2})}e^{-2\pi i\lambda^{2}\frac{a}{q}}\sum_{\ell\in \mathbb{Z}^{d}}G(\frac{a}{q},\ \ell)\phi_{q}(\xi-\frac{\ell}{q})\hat{\sigma}_{\lambda}(\xi-\frac{\ell}{q}).$$
Below is the main result of this section.

\begin{proposition}\label{20240929c}
If $d\geq 5$ and $\frac{d}{d-1}<p\leq 2$, then we have
$$\|\underset{0<\lambda<\infty}{\text{sup}^{+}}N^{\frac{a}{q}}_{\lambda}f\|_{L_{p}(\ell_{\infty}(\mathbb{Z}^{d})\bar{\otimes}\mathcal{M})}\leq Cq^{-\frac{d(p-1)}{p}}\|f\|_{L_{p}(\ell_{\infty}(\mathbb{Z}^{d})\bar{\otimes}\mathcal{M})}.$$
\end{proposition}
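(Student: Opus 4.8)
The plan is to realise $N^{\frac aq}_\lambda$ as, in effect, a tensor product of a \emph{continuous} spherical maximal operator on $L_p(L_\infty(\R^d)\bar{\otimes}\mathcal M)$ with an \emph{arithmetic} convolution operator on the finite group $(\Z/q\Z)^d$, to estimate the two factors separately, and then to invoke the noncommutative sampling principle of \cite{CHW24} in order to recombine them on $\Z^d$. Concretely, using the identity $J_\lambda=\frac{c_d\lambda^{d-2}}{r_d(\lambda^2)}\hat{\sigma}_\lambda$ recalled above and $r_d(\lambda^2)\approx\lambda^{d-2}$ for $d\geq5$, the scalar prefactor of $n^{\frac aq}_\lambda$ is bounded above and below uniformly in $\lambda$ and $q$, and may be absorbed into the middle factor of a factorisation $x_\lambda=ay_\lambda b$ without affecting $\|\text{sup}^{+}_{\lambda}x_\lambda\|_p$; hence it suffices to treat the operator with multiplier $\sum_{\ell\in\Z^d}G(\tfrac aq,\ell)(\phi_q\hat{\sigma}_\lambda)(\xi-\tfrac\ell q)$, where $\phi_q\hat{\sigma}_\lambda$ is supported in the fundamental cube $\tfrac1{2q}Q$. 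The sequence $(G(\tfrac aq,\ell))_\ell$ depends on $\ell$ only modulo $q$ and is precisely the Fourier symbol on $(\Z/q\Z)^d$ of the convolution operator $\mathcal B_{a/q}$ with kernel $k_{a/q}(n)=q^{-d}e^{2\pi i a|n|^2/q}$, whereas $\phi_q\hat{\sigma}_\lambda$ is the Fourier multiplier of the continuous convolution operator $A_\lambda g:=\check{\phi}_q\ast(g\ast\sigma_\lambda)$ on $\R^d$, with $\check{\phi}_q$ the inverse Fourier transform of $\phi_q$.

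For the continuous factor, one has $\|\check{\phi}_q\|_{L^1(\R^d)}=\|\check{\phi}\|_{L^1(\R^d)}$, independent of $q$, and convolution by an $L^1(\R^d)$ kernel is an integral average of translations, which are trace preserving $\ast$-automorphisms of $L_\infty(\R^d)\bar{\otimes}\mathcal M$; since $\|\text{sup}^{+}_{\lambda}\cdot\|_p$ is a genuine norm, applying such an average entrywise contracts it up to the factor $\|\check{\phi}\|_1$. Combined with the noncommutative spherical maximal inequality over $\R^d$ of \cite{Hong13,Hong20}, available in the range $\tfrac d{d-1}<p\leq2$ needed here, this yields
$$\big\|\underset{0<\lambda<\infty}{\text{sup}^{+}}A_\lambda g\big\|_{L_p(L_\infty(\R^d)\bar{\otimes}\mathcal M)}\leq C_{d,p}\,\|g\|_{L_p(L_\infty(\R^d)\bar{\otimes}\mathcal M)}$$
with $C_{d,p}$ independent of $q$.

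For the arithmetic factor there is no maximal function at all, since $\mathcal B_{a/q}$ does not depend on $\lambda$. As $|k_{a/q}|\equiv q^{-d}$ on $(\Z/q\Z)^d$, Minkowski's inequality gives $\|\mathcal B_{a/q}\otimes\mathrm{id}\|\leq\|k_{a/q}\|_{\ell_1}=1$ on $\ell_1((\Z/q\Z)^d;L_1(\mathcal M))$, while the classical Gauss sum bound $|G(\tfrac aq,\ell)|=O(q^{-d/2})$ together with Plancherel on the finite group gives $\|\mathcal B_{a/q}\otimes\mathrm{id}\|\leq\sup_\ell|G(\tfrac aq,\ell)|\leq Cq^{-d/2}$ on $\ell_2((\Z/q\Z)^d;L_2(\mathcal M))$. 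Since $(\Z/q\Z)^d$ is finite and $[L_1(\mathcal M),L_2(\mathcal M)]_\theta=L_p(\mathcal M)$, complex interpolation of these two vector-valued bounds gives, for $1<p\leq2$,
$$\|\mathcal B_{a/q}\otimes\mathrm{id}\|_{\ell_p((\Z/q\Z)^d;L_p(\mathcal M))\to\ell_p((\Z/q\Z)^d;L_p(\mathcal M))}\leq C\,q^{-\frac{d(p-1)}{p}}.$$

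Finally, I would apply the noncommutative sampling principle of \cite{CHW24}, exploiting the decomposition of $\Z^d$ into residue classes modulo $q$, to the family $\{A_\lambda\}$ of continuous operators (multipliers supported in the fundamental cube $\tfrac1{2q}Q$) and to $\mathcal B_{a/q}$: it should bound the $\|\underset{0<\lambda<\infty}{\text{sup}^{+}}\cdot\|_{L_p(\ell_\infty(\Z^d)\bar{\otimes}\mathcal M)}$ norm of the operator with multiplier $\sum_\ell G(\tfrac aq,\ell)(\phi_q\hat{\sigma}_\lambda)(\xi-\tfrac\ell q)$ by a constant times the product of the two displayed estimates, that is by $C_{d,p}\,q^{-d(p-1)/p}$, which is the assertion. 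The step I expect to be the main obstacle is running this sampling principle in the present ``twisted product'' form while keeping the maximal norm over $\lambda$ intact and the constants uniform in $q$: this is exactly the place where the noncommutative framework forbids the naive triangle inequality over the $q^d$ arithmetic frequencies — which by itself produces only the useless bound $q^{d/2}$ — and forces one to exploit the disjointness of the frequency supports $\tfrac rq+\tfrac1{2q}Q$ through the transference machinery. The supporting points — the $L^1$-kernel contraction, the absorption of the bounded scalar prefactor, and the interpolation of the arithmetic bounds — are all routine.
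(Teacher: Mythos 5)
Your two separate estimates are sound and match the paper's in spirit: the continuous noncommutative spherical maximal inequality plus an $L_1$-kernel averaging argument gives the $q$-uniform maximal bound for the spherical part, and $\ell_1$--$\ell_2$ interpolation of the Gauss-sum data gives the decay $q^{-d(p-1)/p}$ (your finite-group computation reproduces exactly the paper's numerology, since $|\hat G(\frac aq,k)|=1$ and $|G(\frac aq,\ell)|\leq Cq^{-d/2}$). But the recombination step, which you yourself flag as the main obstacle, is a genuine gap: the sampling principle you invoke (Lemma \ref{20240929d}, i.e.\ the $d$-dimensional form of \cite[Lemma 3.5]{CHW24}) only transfers multipliers that are \emph{constant-coefficient} periodizations $\sum_{\ell}\Phi(\xi-\frac{\ell}{q})$, whereas your multiplier $\sum_{\ell}G(\frac aq,\ell)(\phi_q\hat{\sigma}_\lambda)(\xi-\frac{\ell}{q})$ has coefficients varying with $\ell$ modulo $q$. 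A ``twisted-product'' transference that simultaneously carries the finite-group operator $\mathcal B_{a/q}$ and the maximal family $\{A_\lambda\}$, with constants uniform in $q$, is not supplied by anything you cite, and proving it is essentially the content of the proposition rather than a routine appeal.

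The paper closes this gap without any twisted sampling principle, by an exact operator factorization. Introduce $\psi$ smooth, supported in $Q$, with $\psi=1$ on $\frac Q2$, so $\phi\psi=\phi$; then, by disjointness of the supports of the translates,
\begin{equation*}
n^{\frac aq}_{\lambda}(\xi)=\frac{c_d\lambda^{d-2}}{r_d(\lambda^{2})}e^{-2\pi i\lambda^{2}\frac aq}\Big(\sum_{\ell}\psi_q(\xi-\tfrac{\ell}{q})\hat{\sigma}_\lambda(\xi-\tfrac{\ell}{q})\Big)\Big(\sum_{\ell}G(\tfrac aq,\ell)\phi_q(\xi-\tfrac{\ell}{q})\Big),
\end{equation*}
i.e.\ $N^{\frac aq}_{\lambda}=c_\lambda\,T_{\sigma_\lambda}T_G$ with $T_G$ independent of $\lambda$. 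The maximal norm then only ever sees the untwisted family $T_{\sigma_\lambda}$, to which Lemma \ref{20240929d} applies directly (after the rescaling $\Phi_\lambda(\xi)=\psi(\xi)\hat{\sigma}_{q^{-1}\lambda}(\xi)$), giving a $q$-uniform bound; the Gauss-sum decay is extracted from $T_G$ alone by a direct interpolation lemma on $\mathbb{Z}^{d}$ (Lemma \ref{20240929e}): Plancherel gives the $L_2$ bound $Cq^{-d/2}$ from $\sup_\ell|G(\frac aq,\ell)|$, the kernel identity $K(n)=\hat a_n\hat\Phi(-n)$ together with $|\hat G(\frac aq,n)|=1$ and $\sum_n|\hat\phi_q(n)|\leq C$ gives the $q$-uniform $L_1$ bound, and interpolation yields $\|T_Gf\|_p\leq Cq^{-\frac{d(p-1)}{p}}\|f\|_p$. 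Composing the two bounds, $\|\text{sup}^+_\lambda T_{\sigma_\lambda}(T_Gf)\|_p\leq C\|T_Gf\|_p$, finishes the proof --- this is the decoupling device your outline is missing, and it is exactly what lets one avoid both the naive triangle inequality over the $q^d$ frequencies and any transference of the twisted periodization.
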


Before giving the proof, we need to do some preparation.
Let $\Phi$ be a smooth function on $\mathbb{R}^{d}$ supported in the cube $\frac{Q}{q}$, define\\
\begin{equation}\label{20240929a}
  \Phi^{q}_{\text{per}}(\xi):=\sum_{\ell\in \mathbb{Z}^{d}}\Phi(\xi-\frac{\ell}{q}).
\end{equation}
Let $(T^{q}_{\Phi})_{\text{dis}}$ be the convolution operator acting on functions on $\mathbb{Z}^{d}$ with Fourier multiplier $\Phi^{q}_{\text{per}}$. That is,\\
$$\sum_{n\in \mathbb{Z}^{d}}(T^{q}_{\Phi})_{\text{dis}}(f)(n)e^{-2\pi in\xi}=\Phi^{q}_{\text{per}}(\xi)\sum_{n\in \mathbb{Z}^{d}}f(n)e^{-2\pi in\xi}$$
for any Schwartz function $f$ on $\mathbb{Z}^{d}$.

For convolution operators like $(T^{q}_{\Phi})_{\text{dis}}$ defined above, we have the following noncommutative sampling principle associated with noncommutative maximal norms.\\

\begin{lemma}\label{20240929d}
Let $\{\Phi_{i}\}_{i\in I}$ be a collection of smooth functions on $\mathbb{R}^{d}$ supported in the cube $Q$ and let $T_{\Phi_{i}}$ be the convolution operator acting on functions on $\mathbb{R}^{d}$ with Fourier multiplier $\Phi_{i}$. For $1\leq p\leq \infty$, if the estimate\\
$$\|\underset{i\in I}{\text{sup}}^{+}T_{\Phi_{i}}(g)\|_{L_{p}(L_{\infty}(\mathbb{R}^{d})\Bar{\otimes}\mathcal{M})}\leq C\|g\|_{L_{p}(L_{\infty}(\mathbb{R}^{d})\Bar{\otimes}\mathcal{M})}$$
holds for all $g\in L_{p}(L_{\infty}(\mathbb{R}^{d})\Bar{\otimes}\mathcal{M})$ and arbitrary semifinite von Neumann algebra $\mathcal{M}$. Then the following estimate\\
$$\|{\sup_{i\in I}}^{+}(T^{q}_{\Phi_{i,q}})_{\text{dis}}(f)\|_{L_{p}(\ell_{\infty}(\mathbb{Z}^{d})\Bar{\otimes}\mathcal{M})}\leq C\|f\|_{L_{p}(\ell_{\infty}(\mathbb{Z}^{d})\Bar{\otimes}\mathcal{M})}$$
holds for all $f\in L_{p}(\ell_{\infty}(\mathbb{Z}^{d})\Bar{\otimes}\mathcal{M})$ and arbitrary semifinite von Neumann algebra $\mathcal{M}$, where $\Phi_{i,q}(\xi):=\Phi_{i}(q\xi)$.\\
\end{lemma}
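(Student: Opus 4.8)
The plan is to reduce the discrete maximal inequality to the continuous one by a sampling/periodization argument, exactly in the spirit of the Magyar--Stein--Wainger transference (cf. \cite{MSW02}) but carried out at the level of noncommutative maximal norms, using the noncommutative transference principle of \cite{CHW24}. The starting observation is that the Fourier multiplier $(\Phi_{i,q})^{q}_{\mathrm{per}}$ of $(T^{q}_{\Phi_{i,q}})_{\mathrm{dis}}$ is $\mathbb{Z}^{d}$-periodic and, because $\Phi_{i}(q\cdot)$ is supported in $\frac{Q}{q}$, on the fundamental cube $Q$ it agrees with the single bump $\Phi_{i}(q\xi)$; moreover the $\frac{1}{q}$-translates $\{\Phi_{i}(q(\xi-\ell/q))\}_{\ell\in\mathbb{Z}^{d}/q\mathbb{Z}^{d}}$ have disjoint supports inside $Q$. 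Thus the discrete operator can be viewed as the restriction to $\mathbb{Z}^{d}$ of the $\mathbb{R}^{d}$-convolution operator whose multiplier is $\Psi_{i,q}(\xi):=\sum_{\ell\in\mathbb{Z}^{d}/q\mathbb{Z}^{d}}\Phi_{i}(q(\xi-\ell/q))\mathbf{1}_{Q}(\xi)$, periodized.

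First I would set up the dilation structure. Write $\delta_{q}:\xi\mapsto q\xi$; then $\Phi_{i,q}=\Phi_{i}\circ\delta_{q}$ and the continuous operator $T_{\Phi_{i,q}}$ on $L_{p}(L_{\infty}(\mathbb{R}^{d})\bar\otimes\mathcal{M})$ is conjugate to $T_{\Phi_{i}}$ by the (normalized) dilation, which is a trace-preserving $*$-isomorphism of $L_{\infty}(\mathbb{R}^{d})\bar\otimes\mathcal{M}$; hence the hypothesis gives
\begin{equation*}
\|\underset{i\in I}{\mathrm{sup}}^{+}T_{\Phi_{i,q}}(g)\|_{L_{p}(L_{\infty}(\mathbb{R}^{d})\bar\otimes\mathcal{M})}\leq C\|g\|_{L_{p}(L_{\infty}(\mathbb{R}^{d})\bar\otimes\mathcal{M})}
\end{equation*}
with the \emph{same} constant $C$, since the $\Phi_{i,q}$ are supported in $\frac{Q}{q}\subseteq Q$. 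Next I would invoke the noncommutative sampling principle from \cite{CHW24}: for a family of continuous Fourier multipliers supported in a fixed fundamental domain, the associated discrete multiplier operators (obtained by periodizing) satisfy the corresponding $L_{p}(\ell_{\infty})$-bound with the same constant. Applying this to $\{\Phi_{i,q}\}_{i\in I}$ — whose supports lie in $\frac{Q}{q}\subseteq Q$ — yields precisely
\begin{equation*}
\|{\sup_{i\in I}}^{+}(T^{q}_{\Phi_{i,q}})_{\mathrm{dis}}(f)\|_{L_{p}(\ell_{\infty}(\mathbb{Z}^{d})\bar\otimes\mathcal{M})}\leq C\|f\|_{L_{p}(\ell_{\infty}(\mathbb{Z}^{d})\bar\otimes\mathcal{M})},
\end{equation*}
which is the claim. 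To make the argument self-contained I would also recall how the sampling inequality is proved: one realizes $\ell_{p}(\mathbb{Z}^{d};L_{p}(\mathcal{M}))$-functions as $\mathbb{Z}^{d}$-periodic elements of $L_{p}(Q;L_{p}(\mathcal{M}))\cong L_{p}(L_{\infty}(Q)\bar\otimes\mathcal{M})$ via the Fourier series, observes that $(T^{q}_{\Phi_{i,q}})_{\mathrm{dis}}$ becomes multiplication by the periodic symbol, and then compares with the continuous operator using that the symbol is, up to the periodization, a single bump supported in a fundamental cube — the positivity of the noncommutative $\mathrm{sup}^{+}$ (i.e. the order characterization $-a\leq x_{i}\leq a$) is what lets the comparison pass through the maximal norm, since a dominating $a\in L_{p}$ on $\mathbb{R}^{d}$ can be restricted/periodized to a dominating element on $\mathbb{Z}^{d}$.

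The main obstacle I anticipate is precisely this last comparison step at the level of the maximal norm rather than of a single operator: in the commutative case one argues pointwise and samples the majorant, but here one must produce, from a majorant $a\in L_{p}(L_{\infty}(\mathbb{R}^{d})\bar\otimes\mathcal{M})$ with $-a\leq T_{\Phi_{i,q}}g\leq a$ for all $i$, a genuine element $b\in L_{p}(\ell_{\infty}(\mathbb{Z}^{d})\bar\otimes\mathcal{M})$ with $-b\leq (T^{q}_{\Phi_{i,q}})_{\mathrm{dis}}f\leq b$ and $\|b\|_{p}\lesssim\|a\|_{p}$, controlling the aliasing created by passing from $\mathbb{R}^{d}$ to the lattice. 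This is exactly where the noncommutative sampling principle of \cite{CHW24} does the heavy lifting — it is engineered to handle noncommutative maximal norms — so in the write-up I would state it carefully (in the form needed, with the cube $\frac{Q}{q}\subseteq Q$ and the dilation built in) and then the proof of Lemma \ref{20240929d} reduces to the bookkeeping of supports and the dilation invariance described above.
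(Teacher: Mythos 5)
Your overall route is the paper's route: the paper gives no self-contained argument either, but simply observes that the lemma is the $d$-dimensional version of \cite[Lemma 3.5]{CHW24} and that the proof goes through verbatim once one replaces the one-dimensional sampling function by $\Psi(x)=\prod_{j=1}^{d}\bigl(\frac{\sin(\pi x_j)}{\pi x_j}\bigr)^{2}$. Your dilation step on the continuous side (conjugating $T_{\Phi_{i,q}}$ to $T_{\Phi_i}$, so the hypothesis transfers with the same constant) is correct and harmless. The problems are in how you then pass to the lattice.

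First, the principle you invoke is stated as ``periodize a multiplier supported in a fundamental domain and the discrete operator inherits the bound.'' If ``periodize'' means over $\mathbb{Z}^{d}$ (the natural periodization for symbols on the torus), then applying it to $\Phi_{i,q}$ produces the symbol $\sum_{\ell}\Phi_{i}(q\xi-q\ell)$, which is \emph{not} the symbol of $(T^{q}_{\Phi_{i,q}})_{\text{dis}}$: by \eqref{20240929a} that symbol is $\sum_{\ell}\Phi_{i}(q\xi-\ell)$, i.e.\ it is $\tfrac1q\mathbb{Z}^{d}$-periodic and carries bumps at every $\ell/q$, not only at integer points. The uniform-in-$q$ statement with the $\tfrac1q\mathbb{Z}^{d}$-periodization is the actual content of the lemma (as in \cite[Prop.~2.1]{MSW02}), and it cannot be recovered from the $q=1$ case by your dilation, since $\xi\mapsto q\xi$ does not preserve the lattice $\mathbb{Z}^{d}$; if instead you invoke the principle ``in the form needed, with $Q/q$ and the dilation built in,'' you are simply quoting the lemma you are asked to prove (or its one-dimensional ancestor, which still requires the $d$-dimensional adaptation). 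Second, the step you flag as the main obstacle is indeed where your sketch breaks: ``restricting/periodizing a dominating $a\in L_{p}(L_\infty(\mathbb{R}^{d})\bar\otimes\mathcal{M})$ to a dominating element on $\mathbb{Z}^{d}$'' is not a well-defined or bounded operation for a general $L_p$ element (you are restricting to a null set). Making this legitimate is precisely the role of the sampling function $\Psi$: it is nonnegative (so multiplying/convolving with it preserves the order relations $-a\le x_i\le a$ that define the maximal norm) and band-limited (so the band-limited structure controls the aliasing between $\mathbb{R}^{d}$ and $\mathbb{Z}^{d}$). A complete write-up must carry out the \cite{CHW24} sampling argument with this $d$-dimensional $\Psi$ and with the $q$-structure kept throughout; as it stands, your step 3 is either circular or proves a bound for the wrong discrete operator.
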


The lemma above is the $d$-dimensional version of \cite[Lemma 3.5]{CHW24}. Taking the sampling function $\Psi(x):=(\frac{\text{sin}(\pi x_{1})}{\pi x_{1}})^{2}(\frac{\text{sin}(\pi x_{2})}{\pi x_{2}})^{2}\cdots (\frac{\text{sin}(\pi x_{d})}{\pi x_{d}})^{2}$ for $x=(x_{1},x_{2},\cdots,x_{d})\in \mathbb{R}^{d}$ instead of the one-dimensional sampling function, the proof of the lemma above is quite similar as in the one-dimensional case.

In the next, we consider a version of a convolution operator, whose Fourier multiplier is somewhat akin to \eqref{20240929a}. To be precise, it is of the form\\
\begin{equation}\label{20240929b}
 m(\xi):=\sum_{\ell\in \mathbb{Z}^{d}}a_{\ell}\Phi(\xi-\frac{\ell}{q})\\
\end{equation}
satisfying:\\
(1) $\Phi$ is a smooth function supported in $\frac{Q}{q}$ with $\sum_{n\in \mathbb{Z}^{d}}|\hat{\Phi}(n)|\leq A$ for some positive constant $A$.\\
(2) $a_{\ell}$ is $q\mathbb{Z}^{d}$ periodic, i.e., $a_{\ell}=a_{\ell'}$ if $\ell\equiv \ell'\ \text{mod}\ q\mathbb{Z}^{d}$.\\

\begin{lemma}\label{20240929e}
Let $T$ be the convolution operator on functions on $\mathbb{Z}^{d}$, whose Fourier multiplier is given by \eqref{20240929b}, satisfying the conditions above. For $1\leq p\leq 2$, we have\\
$$\|Tf\|_{L_{p}(\ell_{\infty}(\mathbb{Z}^{d})\Bar{\otimes}\mathcal{M})}\leq A\|(a_{\ell})_{\ell}\|_{\ell_{\infty}(\mathbb{Z}^{d})}^{2-\frac{2}{p}}\|(\hat{a}_{k})_{k}\|_{\ell_{\infty}(\mathbb{Z}^{d})}^{\frac{2}{p}-1}\|f\|_{L_{p}(\ell_{\infty}(\mathbb{Z}^{d})\Bar{\otimes}\mathcal{M})}$$
for all $f\in L_{p}(\ell_{\infty}(\mathbb{Z}^{d})\Bar{\otimes}\mathcal{M})$. Here $\hat{a}_{k}$ is given by\\
$$\hat{a}_{k}=\sum_{\ell\in \mathbb{Z}^{d}/q\mathbb{Z}^{d}}a_{\ell}e^{2\pi ik\frac{\ell}{q}}.$$
\end{lemma}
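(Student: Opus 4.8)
The idea is to split the multiplier $m(\xi)=\sum_{\ell}a_\ell\Phi(\xi-\tfrac{\ell}{q})$ into a ``periodic weight'' part and a ``smooth bump'' part, so that Lemma~\ref{20240929d} can be applied to the bump while the periodic part is handled by direct $L_p$-interpolation of a finite averaging operator on $\Z^d/q\Z^d$. Concretely, since $a_\ell$ is $q\Z^d$-periodic, write $a_\ell=\sum_{k\in\Z^d/q\Z^d}c_k e^{2\pi i k\ell/q}$ by discrete Fourier inversion on the finite group $(\Z/q\Z)^d$, where $c_k=q^{-d}\hat a_k$. Substituting this into $m$, one gets $m(\xi)=\sum_{k\in\Z^d/q\Z^d}c_k\sum_{\ell\in\Z^d}e^{2\pi i k\ell/q}\Phi(\xi-\tfrac\ell q)$. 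The inner sum is, up to a modulation, a periodization of $\Phi$: indeed $\sum_\ell e^{2\pi i k\ell/q}\Phi(\xi-\tfrac\ell q)=e^{2\pi i k\xi}\sum_\ell \Psi_k(\xi-\tfrac\ell q)$ where $\Psi_k(\eta):=e^{-2\pi i k\eta}\Phi(\eta)$ is again smooth, supported in $\tfrac Qq$, with $\sum_n|\hat\Psi_k(n)|=\sum_n|\hat\Phi(n-k)|\le A$. So $T$ decomposes as a combination over $k\in\Z^d/q\Z^d$ of modulated versions of the sampled operators $(T^q_{\Psi_{k,q}})_{\mathrm{dis}}$.

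The modulation $e^{2\pi i k\xi}$ on the multiplier side is translation by the vector $k$ on the physical side, hence an isometry of $L_p(\ell_\infty(\Z^d)\bar\otimes\mathcal M)$; it does not affect norms (and is harmless inside a maximal norm since it is a single shift). Thus it suffices to bound $\big\|\sum_{k} c_k\,(T^q_{\Psi_{k,q}})_{\mathrm{dis}} f\big\|_{L_p(\ell_\infty(\Z^d)\bar\otimes\mathcal M)}$. Here I would use Lemma~\ref{20240929d}: each $T_{\Psi_k}$ on $\R^d$ is convolution with a function whose $L_1(\R^d)$-norm is controlled (via $\sum_n|\hat\Psi_k(n)|\le A$ together with the support of $\Psi_k$ in $Q$, which makes $T_{\Psi_k}g$ a sum of shifts of $g$ weighted by $\hat\Psi_k(n)$ — this is the standard ``sampling'' reduction), so $\|T_{\Psi_k}g\|_{L_p}\le A\|g\|_{L_p}$ uniformly in $k$; transferring, $\|(T^q_{\Psi_{k,q}})_{\mathrm{dis}}f\|_{L_p(\ell_\infty(\Z^d)\bar\otimes\mathcal M)}\le A\|f\|_{L_p(\ell_\infty(\Z^d)\bar\otimes\mathcal M)}$ with the same constant. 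The point of the argument, though, is that the operators $(T^q_{\Psi_{k,q}})_{\mathrm{dis}}$ for distinct residues $k$ are ``Fourier-disjoint'' at the scale $1/q$: their multipliers $e^{2\pi i k\xi}\Phi^q_{\mathrm{per}}$ are supported on essentially disjoint (mod-$1$) pieces, so $\sum_k c_k (T^q_{\Psi_{k,q}})_{\mathrm{dis}}$ behaves like a genuine convolution operator associated with the finite-group convolution by $(\hat a_k/q^d)_k$.

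The two endpoint estimates are then: at $p=2$, Plancherel on $\Z^d$ (Hilbert-space valued, i.e. the $L_2$ isometry) gives $\|Tf\|_{L_2}\le A\,\|(\hat a_k)_k\|_{\ell_\infty}\cdot q^{-d}\cdot\|f\|_{L_2}$ — wait, more precisely one computes the operator norm of the multiplier pointwise: on the support of $\Phi(\cdot-\tfrac\ell q)$ only the term $a_\ell$ survives, so $\|m\|_\infty\le A\|(a_\ell)_\ell\|_{\ell_\infty}$... this is the $L_2$-bound, giving the exponent $2-\tfrac2p=0$ at $p=2$. At $p=1$, one uses instead the finite-group structure: writing $T$ via the periodized weight, $T=(T^q_\Phi)_{\mathrm{dis}}$ composed with discrete convolution by the $q\Z^d$-periodic sequence $(a_\ell)$, and the $L_1(\ell_\infty)$-norm of the latter is $\|(\hat a_k/q^d)_k\|_{\ell_1(\Z^d/q\Z^d)}\le \|(\hat a_k)_k\|_{\ell_\infty}$ (there are $q^d$ terms each of size $\le q^{-d}\|(\hat a_k)\|_\infty$), times $A$ for the bump part. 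Interpolating the $L_1$-bound $A\|(\hat a_k)_k\|_{\ell_\infty}$ and the $L_2$-bound $A\|(a_\ell)_\ell\|_{\ell_\infty}$ via the maximal-norm interpolation Lemma with $\tfrac1p=\tfrac{1-\theta}{1}+\tfrac\theta2$, i.e. $\theta=\tfrac2p-1$... hmm, $2-\tfrac2p$ should be the weight on $\|(a_\ell)\|_\infty$; checking: the $L_2$ weight is $\theta'$-th power where $1-\tfrac1p=(1-\tfrac1p)$, so the $L_\infty(a)$-exponent is $2(1-\tfrac1p)=2-\tfrac2p$ and the $L_\infty(\hat a)$-exponent is $\tfrac2p-1$, matching the statement. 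Finally, since everything factors through the noncommutative Young and transference inequalities, the $\mathcal M$-valued version follows verbatim.

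\textbf{Main obstacle.} The delicate point is justifying that $\sum_k c_k(T^q_{\Psi_{k,q}})_{\mathrm{dis}}$ can be treated as a single sampled operator rather than as a sum of $q^d$ separate ones (which would lose a factor $q^d$): one must exploit that, modulo $1$, the frequency supports are disjoint, or equivalently repackage the whole thing as $(T^q_\Phi)_{\mathrm{dis}}$ precomposed with periodic discrete convolution and feed only the single bump $\Phi$ to Lemma~\ref{20240929d}. Getting the bookkeeping of the two norms $\|(a_\ell)\|_{\ell_\infty}$ and $\|(\hat a_k)\|_{\ell_\infty}$ to land on the correct interpolation exponents — in particular tracking the $q^{-d}$ from finite-Fourier inversion — is where the care is needed; the noncommutative ingredients (Young, transference via Lemma~\ref{20240929d}, maximal-norm interpolation) are all already available.
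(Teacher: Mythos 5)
Your $L_2$ endpoint is exactly the paper's: since the bumps $\Phi(\cdot-\ell/q)$ have disjoint supports, $\|m\|_{L_\infty(Q)}\le\|(a_\ell)\|_{\ell_\infty}\|\Phi\|_\infty\le A\|(a_\ell)\|_{\ell_\infty}$, and Hilbert-space-valued Plancherel gives the bound with exponent landing on $\|(a_\ell)\|_{\ell_\infty}$. The gap is in your $L_1$ endpoint, and you flag it yourself as the ``main obstacle'' without resolving it. The paper's route is a one-line kernel computation: using that $a_\ell$ is $q\mathbb{Z}^d$-periodic and $\Phi$ is supported in $\frac{Q}{q}$, the kernel of $T$ is $K(n)=\int_Q\sum_\ell a_\ell\Phi(\xi-\frac{\ell}{q})e^{2\pi in\xi}\,d\xi=\hat a_n\,\hat\Phi(-n)$, so $\|K\|_{\ell_1}\le\|(\hat a_k)\|_{\ell_\infty}\sum_n|\hat\Phi(n)|\le A\|(\hat a_k)\|_{\ell_\infty}$, and noncommutative Young finishes; then plain noncommutative $L_p$ interpolation between $L_1$ and $L_2$ (no maximal-norm interpolation or Lemma \ref{20240929d} is needed, since $T$ is a single operator) gives the stated exponents.

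Your proposed substitutes for this step do not work as stated. First, the multipliers $e^{2\pi ik\xi}\Phi^q_{\mathrm{per}}(\xi)$ for distinct residues $k$ are \emph{not} Fourier-disjoint: modulation by $e^{2\pi ik\xi}$ does not move the support of the multiplier at all (it shifts the physical-side kernel), so the claimed ``essentially disjoint (mod-$1$) pieces'' argument fails. Second, the factorization $T=(T^q_\Phi)_{\mathrm{dis}}$ composed with discrete convolution by the periodic data $(a_\ell)$ is false: the multiplier of that composition is $\Phi^q_{\mathrm{per}}(\xi)\sum_k c_ke^{2\pi ik\xi}$, and the trigonometric polynomial $\sum_k c_ke^{2\pi ik\xi}$ equals $a_\ell$ only at the points $\xi=\ell/q$, not on the whole support of $\Phi(\cdot-\frac{\ell}{q})$. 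Your finite-Fourier decomposition $m=\sum_k c_k\,e^{2\pi ik\xi}\,\Psi_k^{q\text{-per}}$ is itself correct and could be salvaged, but only by computing the $\ell_1$ kernel norm of each piece honestly: the multiplier $\Psi_k^{q\text{-per}}$ is $q^{-1}\mathbb{Z}^d$-periodic, so its kernel is supported on $q\mathbb{Z}^d$ with $\ell_1$-norm $q^d\sum_m|\hat\Phi(k-qm)|$, and summing against $|c_k|\le q^{-d}\|(\hat a_k)\|_{\ell_\infty}$ recombines to $A\|(\hat a_k)\|_{\ell_\infty}$ with no $q^d$ loss --- which is precisely the paper's kernel identity $K(n)=\hat a_n\hat\Phi(-n)$ in disguise. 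As written, however, your argument for the $L_1$ bound rests on the two incorrect claims above (and on an unjustified uniform transference bound for $T_{\Psi_k}$ on $\mathbb{R}^d$), so the proof is incomplete at exactly the point that carries the factor $\|(\hat a_k)\|_{\ell_\infty}^{\frac{2}{p}-1}$, i.e.\ the decay in $q$ that the rest of the paper needs.
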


\begin{proof}
For every $\xi\in \mathbb{R}^{d}$, only one term in the sum in \eqref{20240929b} is nonzero since $\Phi$ is supported in $\frac{Q}{q}$. As a result,\\
\begin{equation*}
\begin{aligned}
&\|m\|_{L_{\infty}(Q)}\leq \|(a_{\ell})_{\ell}\|_{\ell_{\infty}(\mathbb{Z}^{d})}\|\Phi\|_{L_{\infty}(Q)}\\
&\leq \|(a_{\ell})_{\ell}\|_{\ell_{\infty}(\mathbb{Z}^{d})}\sum_{n\in \mathbb{Z}^{d}}|\hat{\Phi}(n)|\leq A\|(a_{\ell})_{\ell}\|_{\ell_{\infty}(\mathbb{Z}^{d})}.
\end{aligned}
\end{equation*}
The second inequality above holds since $\Phi(\xi)=\sum_{n\in \mathbb{Z}^{d}}\hat{\Phi}(n)e^{2\pi in\xi}$. Therefore, it follows from the Hilbert space-valued Plancherel theorem that\\
$$\|Tf\|_{L_{2}(\ell_{\infty}(\mathbb{Z}^{d})\Bar{\otimes}\mathcal{M})}\leq \|m\|_{L_{\infty}(Q)}\|f\|_{L_{2}(\ell_{\infty}(\mathbb{Z}^{d})\Bar{\otimes}\mathcal{M})}\leq A\|(a_{\ell})_{\ell}\|_{\ell_{\infty}(\mathbb{Z}^{d})}\|f\|_{L_{2}(\ell_{\infty}(\mathbb{Z}^{d})\Bar{\otimes}\mathcal{M})}.$$
On the other hand, the kernel $K(n)$ of the operator $T$ is given by\\
\begin{equation*}
\begin{aligned}
    K(n)&=\int_{Q}\sum_{\ell\in \mathbb{Z}^{d}}a_{\ell}\Phi(\xi-\frac{\ell}{q})e^{2\pi in\xi}d\xi\\
    &=\sum_{\ell\in \mathbb{Z}^{d}}a_{\ell}e^{2\pi in\frac{\ell}{q}}\int_{Q-\frac{\ell}{q}}\Phi(\xi)e^{2\pi in\xi}d\xi\\
    &=\sum_{\ell\in \mathbb{Z}^{d}/q\mathbb{Z}^{d}}a_{\ell}e^{2\pi in\frac{\ell}{q}}\hat{\Phi}(-n)=\hat{a}_{n}\hat{\Phi}(-n).
\end{aligned}
\end{equation*}
The third equality above holds since $a_{\ell}$ is $q\mathbb{Z}^{d}$ periodic and $\Phi$ is supported in $\frac{Q}{q}$.
By the noncommutative Young inequality, we obtain\\
$$\|Tf\|_{L_{1}(\ell_{\infty}(\mathbb{Z}^{d})\Bar{\otimes}\mathcal{M})}\leq \|K\|_{\ell_{1}(\mathbb{Z}^{d})}\|f\|_{L_{1}(\ell_{\infty}(\mathbb{Z}^{d})\Bar{\otimes}\mathcal{M})}\leq A\|(\hat{a}_{n})_{n}\|_{\ell_{\infty}(\mathbb{Z}^{d})}\|f\|_{L_{1}(\ell_{\infty}(\mathbb{Z}^{d})\Bar{\otimes}\mathcal{M})}.$$
An easy application of the interpolation for noncommutative $L_{p}$ spaces completes the proof.
\end{proof}

Now we are ready to prove Proposition \ref{20240929c}.
Let $\psi$ be a smooth function on $\mathbb{R}^{d}$ supported in $Q$ with $\psi=1$ on $\frac{Q}{2}$, then $\phi\psi=\phi$. Define two convolution operators $T_{G},\ T_{\sigma_{\lambda}}$ on functions on $\mathbb{Z}^{d}$ with Fourier multipliers respectively,
$$\sum_{\ell\in \mathbb{Z}^{d}}G(\frac{a}{q},\ \ell)\phi_{q}(\xi-\frac{\ell}{q})$$
and\\
$$\sum_{\ell\in \mathbb{Z}^{d}}\psi_{q}(\xi-\frac{\ell}{q})\hat{\sigma}_{\lambda}(\xi-\frac{\ell}{q}),$$
then we have\\
$$N^{\frac{a}{q}}_{\lambda}f=\frac{c_{d}\lambda^{d-2}}{r_{d}(\lambda^{2})}e^{-2\pi i\lambda^{2}\frac{a}{q}}T_{\sigma_{\lambda}}T_{G}f.$$
Let $T_{\Phi_{\lambda}}$ be the convolution operator associated to the multiplier $\Phi_{\lambda}(\xi):=\psi(\xi)\hat{\sigma}_{q^{-1}\lambda}(\xi)$, then a combination of noncommutative spherical maximal inequality (cf. \cite[Proposition 4.1]{Hong13}) and noncommutative Young inequality yields
$$\|\underset{0< \lambda<\infty}{\text{sup}^{+}}T_{\Phi_{\lambda}}f\|_{L_{p}(L_{\infty}(\mathbb{R}^{d})\Bar{\otimes}\mathcal{M})}\leq C\|f\|_{L_{p}(L_{\infty}(\mathbb{R}^{d})\Bar{\otimes}\mathcal{M})}, p>\frac{d}{d-1},\ d\geq 3.$$
It follows then, by Lemma \ref{20240929d}, that
\begin{equation}\label{20240929f}
 \|\underset{0< \lambda<\infty}{\text{sup}^{+}}T_{\sigma_{\lambda}}f\|_{L_{p}(\ell_{\infty}(\mathbb{Z}^{d})\Bar{\otimes}\mathcal{M})}\leq C\|f\|_{L_{p}(\ell_{\infty}(\mathbb{Z}^{d})\Bar{\otimes}\mathcal{M})},\ p>\frac{d}{d-1},\ d\geq 3.
\end{equation}
On the other hand, $\phi_{q}$ is a smooth function supported in $\frac{Q}{q}$ and
$$\sum_{n\in \mathbb{Z}^{d}}|\hat{\phi}_{q}(n)|=\sum_{n\in \mathbb{Z}^{d}}q^{-d}|\hat{\phi}(q^{-1}n)|\leq Cq^{-d}\sum_{n\in \mathbb{Z}^{d}}(1+|q^{-1}n|)^{-d-1}\leq C,$$
where $C$ is a positive constant independent of $q$.
Noting the Gauss sum $G(\frac{a}{q},\ \ell)$ is $q\mathbb{Z}^{d}$ periodic, we obtain, by Lemma \ref{20240929e},
$$\|T_{G}f\|_{L_{p}(\ell_{\infty}(\mathbb{Z}^{d})\Bar{\otimes}\mathcal{M})}\leq C\text{sup}_{\ell}|G(\frac{a}{q},\ \ell)|^{2-\frac{2}{p}}\text{sup}_{k}|\hat{G}(\frac{a}{q},\ k)|^{\frac{2}{p}-1}\|f\|_{L_{p}(\ell_{\infty}(\mathbb{Z}^{d})\Bar{\otimes}\mathcal{M})}$$
for all $1\leq p\leq 2$, where\\
\begin{equation*}
    \begin{aligned}
     &\hat{G}(\frac{a}{q},\ k):=\sum_{\ell\in \mathbb{Z}^{d}/q\mathbb{Z}^{d}}e^{2\pi ik\frac{\ell}{q}}G(\frac{a}{q},\ \ell)\\
=&\frac{1}{q^{d}}\sum_{n\in\mathbb{Z}^{d}/q\mathbb{Z}^{d}}\sum_{\ell\in\mathbb{Z}^{d}/q\mathbb{Z}^{d}}e^{2\pi ik\frac{\ell}{q}}e^{2\pi i|n|^{2}\frac{a}{q}}e^{2\pi in\frac{\ell}{q}}=e^{2\pi i|k|^{2}\frac{a}{q}}.
    \end{aligned}
\end{equation*}
Given the well-known estimate $|G(\frac{a}{q},\ \ell)|\leq Cq^{-\frac{d}{2}}$, we have
\begin{equation}\label{20240929g}
    \|T_{G}f\|_{L_{p}(\ell_{\infty}(\mathbb{Z}^{d})\Bar{\otimes}\mathcal{M})}\leq Cq^{-\frac{d(p-1)}{p}}\|f\|_{L_{p}(\ell_{\infty}(\mathbb{Z}^{d})\Bar{\otimes}\mathcal{M})},\ 1\leq p\leq 2.\\
\end{equation}
When $d\geq 5$ and $\frac{d}{d-1}<p\leq 2$, we conclude, by the inequalities \eqref{20240929f} and \eqref{20240929g},
\begin{equation*}
    \begin{aligned}
    &\|\underset{0< \lambda<\infty}{\text{sup}^{+}}N^{\frac{a}{q}}_{\lambda}f\|_{L_{p}(\ell_{\infty}(\mathbb{Z}^{d})\Bar{\otimes}\mathcal{M})}\\
    \leq &{\sup_{0<\lambda<\infty}}\big|\frac{c_{d}\lambda^{d-2}}{r_{d}(\lambda^{2})}e^{-2\pi i\lambda^{2}\frac{a}{q}}\big|\|\underset{0<\lambda<\infty}{\text{sup}^{+}}T_{\sigma_{\lambda}}T_{G}f\|_{L_{p}(\ell_{\infty}(\mathbb{Z}^{d})\Bar{\otimes}\mathcal{M})}\\
      \leq &C\|T_{G}f\|_{L_{p}(\ell_{\infty}(\mathbb{Z}^{d})\Bar{\otimes}\mathcal{M})}\\
      \leq &Cq^{-\frac{d(p-1)}{p}}\|f\|_{L_{p}(\ell_{\infty}(\mathbb{Z}^{d})\Bar{\otimes}\mathcal{M})}.
    \end{aligned}
\end{equation*}
\qed

\section{\textbf{Proof of the main result}}\label{sec6}
Now we are ready to prove Theorem \ref{20240929h}.

Step 1. We claim that the following inequality
\begin{equation*}
\|\underset{0<\lambda<\infty}{\text{sup}^{+}}M_{\lambda}f\|_{L_{p}(\ell_{\infty}(\mathbb{Z}^{d})\Bar{\otimes}\mathcal{M})}\leq C\|f\|_{L_{p}(\ell_{\infty}(\mathbb{Z}^{d})\Bar{\otimes}\mathcal{M})},\ p>\frac{d}{d-2}
\end{equation*}
holds for some positive constant $C$ depending on $d$ and $p$.

Define $N_{\lambda}:=\sum_{(q,\ a)\in \mathcal{F}}N^{\frac{a}{q}}_{\lambda}$ with $\mathcal{F}:=\{(q,\ a)\in \mathbb{N}^{2}:\ q\geq 1,\ 1\leq a\leq q\ \text{with}\ (a,\ q)=1\}\cup \{(1,\ 0)\}$, then we have, by Proposition \ref{20240929c},\\
\begin{equation}\label{20240929l}
\begin{aligned}
\|\underset{0<\lambda<\infty}{\text{sup}^{+}}N_{\lambda}f\|_{L_{p}(\ell_{\infty}(\mathbb{Z}^{d})\bar{\otimes}\mathcal{M})}&\leq \sum_{(q,\ a)\in \mathcal{F}}
\|\underset{0<\lambda<\infty}{\text{sup}^{+}}N^{\frac{a}{q}}_{\lambda}f\|_{L_{p}(\ell_{\infty}(\mathbb{Z}^{d})\bar{\otimes}\mathcal{M})}\\
&\leq C\sum_{(q,\ a)\in \mathcal{F}}q^{-\frac{d(p-1)}{p}}\|f\|_{L_{p}(\ell_{\infty}(\mathbb{Z}^{d})\bar{\otimes}\mathcal{M})}\\
&\leq C\|f\|_{L_{p}(\ell_{\infty}(\mathbb{Z}^{d})\bar{\otimes}\mathcal{M})}
\end{aligned}
\end{equation}
if $d\geq 5$ and $\frac{d}{d-2}<p\leq 2$.
It follows then from Corollary \ref{20240929i} that
\begin{equation}\label{20240929k}
\|\underset{\Lambda\leq \lambda\leq 2\Lambda}{\text{sup}^{+}}(M_{\lambda}-N_{\lambda})f\|_{L_{p}(\ell_{\infty}(\mathbb{Z}^{d})\bar{\otimes}\mathcal{M})}\leq C\|f\|_{L_{p}(\ell_{\infty}(\mathbb{Z}^{d})\bar{\otimes}\mathcal{M})}.\\
\end{equation}
For the case $p=2$, we obtain, by Proposition \ref{20240929j},
\begin{equation*}
\begin{aligned}
&\|\underset{\Lambda\leq \lambda\leq 2\Lambda}{\text{sup}^{+}}(M_{\lambda}-N_{\lambda})f\|_{L_{2}(\ell_{\infty}(\mathbb{Z}^{d})\bar{\otimes}\mathcal{M})}\\
\leq &\sum_{\frac{a}{q}\in \mathcal{F}_{\Lambda}}
\|\underset{\Lambda\leq \lambda\leq 2\Lambda}{\text{sup}^{+}}(M^{\frac{a}{q}}_{\lambda}-N^{\frac{a}{q}}_{\lambda})f\|_{L_{2}(\ell_{\infty}(\mathbb{Z}^{d})\bar{\otimes}\mathcal{M})}+\sum_{(q,\ a)\in \mathcal{F}\setminus \mathcal{F}_{\Lambda}}\|\underset{\Lambda\leq \lambda\leq 2\Lambda}{\text{sup}^{+}}N^{\frac{a}{q}}_{\lambda}f\|_{L_{2}(\ell_{\infty}(\mathbb{Z}^{d})\bar{\otimes}\mathcal{M})}\\
\leq &C(\Lambda^{2-\frac{d}{2}}+\sum_{(q,\ a)\in \mathcal{F}\setminus \mathcal{F}_{\Lambda}}q^{-\frac{d}{2}})\|f\|_{L_{2}(\ell_{\infty}(\mathbb{Z}^{d})\bar{\otimes}\mathcal{M})}\\
\leq &C\Lambda^{2-\frac{d}{2}}\|f\|_{L_{2}(\ell_{\infty}(\mathbb{Z}^{d})\bar{\otimes}\mathcal{M})}.
\end{aligned}  
\end{equation*}
Interpolating this with the inequality \eqref{20240929k} yields
\begin{equation*}
\|\underset{\Lambda\leq \lambda\leq 2\Lambda}{\text{sup}^{+}}(M_{\lambda}-N_{\lambda})f\|_{L_{p}(\ell_{\infty}(\mathbb{Z}^{d})\bar{\otimes}\mathcal{M})}\leq C\Lambda^{(2-\frac{d}{2})\theta_{p}}\|f\|_{L_{p}(\ell_{\infty}(\mathbb{Z}^{d})\bar{\otimes}\mathcal{M})}    
\end{equation*}
with $\frac{d}{d-2}<p\leq 2$ and $0<\theta_{p}\leq 1$.
Now observe that $\lambda\geq 1$, and thus
\begin{equation*}
\begin{aligned}
\|\underset{0<\lambda<\infty}{\text{sup}^{+}}(M_{\lambda}-N_{\lambda})f\|_{L_{p}(\ell_{\infty}(\mathbb{Z}^{d})\bar{\otimes}\mathcal{M})}&\leq \sum_{k\geq 0}\|\underset{2^{k}\leq \lambda\leq 2^{k+1}}{\text{sup}^{+}}(M_{\lambda}-N_{\lambda})f\|_{L_{p}(\ell_{\infty}(\mathbb{Z}^{d})\bar{\otimes}\mathcal{M})}\\  
&\leq C\sum_{k\geq 0}2^{(2-\frac{d}{2})\theta_{p}k}\|f\|_{L_{p}(\ell_{\infty}(\mathbb{Z}^{d})\bar{\otimes}\mathcal{M})}\\   
&\leq C\|f\|_{L_{p}(\ell_{\infty}(\mathbb{Z}^{d})\bar{\otimes}\mathcal{M})}.
\end{aligned}
\end{equation*}
In combination with the inequality \eqref{20240929l}, we conclude\\
\begin{equation*}
\|\underset{0<\lambda<\infty}{\text{sup}^{+}}M_{\lambda}f\|_{L_{p}(\ell_{\infty}(\mathbb{Z}^{d})\bar{\otimes}\mathcal{M})}
\leq C\|f\|_{L_{p}(\ell_{\infty}(\mathbb{Z}^{d})\bar{\otimes}\mathcal{M})},\ \frac{d}{d-2}<p\leq 2.
\end{equation*}
The corresponding estimate for $p=\infty$ is trivial, and hence the result for $p>2$ follows by interpolation, completing the proof of the claim.

Step 2. We now show how the maximal inequality of $M_{\lambda}$ on $L_{p}(\ell_{\infty}(\mathbb{Z}^{d})\bar{\otimes}\mathcal{M})$ leads to the maximal inequality of $M^{\gamma}_{\lambda}$ on $L_{p}(\mathcal{M})$.

Note that
$$\|\underset{0<\lambda<\infty}{\text{sup}^{+}}M_{\lambda}^{\gamma}x\|_{L_{p}(\mathcal{M})}={\sup_{N\geq 1}}\|\underset{1\leq \lambda\leq N}{\text{sup}^{+}}M_{\lambda}^{\gamma}x\|_{L_{p}(\mathcal{M})},\ \forall x\in L_{p}(\mathcal{M}).$$
To prove Theorem \ref{20240929h}, it suffices to show that for any positive integer $N$, the following inequality
\begin{equation}\label{20240930e}
 \|\underset{1\leq \lambda\leq N}{\text{sup}^{+}}M_{\lambda}^{\gamma}x\|_{L_{p}(\mathcal{M})}\leq C\|x\|_{L_{p}(\mathcal{M})},\ \forall x\in \mathcal{S}_{\mathcal{M}}   
\end{equation}
holds for a positive constant $C$ independent of $N$.
Fix $x\in \mathcal{S}_{\mathcal{M}}$ and define a $\mathcal{S}_{\mathcal{M}}$-valued function on $\mathbb{Z}^{d}$ by $f(n):=\gamma^{n}x$, then
$$M_{\lambda}f(n)=\frac{1}{r_{d}(\lambda^{2})}\sum_{|m|=\lambda}f(n+m)=\frac{1}{r_{d}(\lambda^{2})}\sum_{|m|=\lambda}\gamma^{n+m}x=\gamma^{n}M^{\gamma}_{\lambda}x$$
for all $0<\lambda<\infty$. Here the last equality above holds since $\{\gamma_{i}\}$ is a commuting family.
For a large positive integer $J$, we define a finitely supported function on $\mathbb{Z}^{d}$ by $g(n)=f(n)\chi_{|n|\leq J}$. For all $1\leq \lambda\leq N$, it is easy to check that
\begin{equation}\label{20240930a}
M_{\lambda}g(n)=\gamma^{n}M^{\gamma}_{\lambda}x,\ \forall n\ \text{with}\ |n|\leq J-N.
\end{equation}
By the claim in Step 1, we get
\begin{equation}\label{20240930b}
\begin{aligned}
&\|\underset{1\leq \lambda\leq N}{\text{sup}^{+}}M_{\lambda}g\|^{p}_{L_{p}(\ell_{\infty}(\mathbb{Z}^{d})\Bar{\otimes}\mathcal{M})}\leq C\|g\|^{p}_{L_{p}(\ell_{\infty}(\mathbb{Z}^{d})\Bar{\otimes}\mathcal{M})}\\
=&C\sum_{|n|\leq J}\|\gamma^{n}x\|^{p}_{L_{p}(\mathcal{M})}\leq CJ^{d-1}\|x\|^{p}_{L_{p}(\mathcal{M})}.
\end{aligned}
\end{equation}
The last inequality above follows since $\gamma_{i}$ is an isometry on $L_{p}(\mathcal{M})$ and $r_{d}(k)\approx k^{\frac{d-2}{2}}$ when $d\geq 5$. An application of Cauchy-Schwartz inequality (as shown in the proof of \cite[Theorem 3.1]{HLW21}) yields
\begin{equation}\label{20240930c}
   \sum_{n\in\mathbb{Z}^{d}} \|\underset{1\leq \lambda\leq N}{\text{sup}^{+}}M_{\lambda}g(n)\|^{p}_{L_{p}(\mathcal{M})}\leq \|\underset{1\leq \lambda\leq N}{\text{sup}^{+}}M_{\lambda}g\|^{p}_{L_{p}(\ell_{\infty}(\mathbb{Z}^{d})\Bar{\otimes}\mathcal{M})}.\\
\end{equation}
And it is easy to check by definition that\\
\begin{equation}\label{20240930d}
\|\underset{1\leq \lambda\leq N}{\text{sup}^{+}}\gamma^{n}M_{\lambda}^{\gamma}x\|_{L_{p}(\mathcal{M})}=\|\underset{1\leq \lambda\leq N}{\text{sup}^{+}}M_{\lambda}^{\gamma}x\|_{L_{p}(\mathcal{M})}.\\
\end{equation}
A combination of the formulae and inequalities \eqref{20240930a}, \eqref{20240930b}, \eqref{20240930c} and \eqref{20240930d} yields the following estimate\\
\begin{equation*}
\begin{aligned}
(J-N)^{d-1}\|\underset{1\leq \lambda\leq N}{\text{sup}^{+}}M_{\lambda}^{\gamma}x\|^{p}_{L_{p}(\mathcal{M})}&\leq C\sum_{|n|\leq J-N}\|\underset{1\leq \lambda\leq N}{\text{sup}^{+}}\gamma^{n}M_{\lambda}^{\gamma}x\|^{p}_{L_{p}(\mathcal{M})}\\
&\leq C\sum_{n\in\mathbb{Z}^{d}} \|\underset{1\leq \lambda\leq N}{\text{sup}^{+}}M_{\lambda}g(n)\|^{p}_{L_{p}(\mathcal{M})}\\
&\leq CJ^{d-1}\|x\|^{p}_{L_{p}(\mathcal{M})}.\\  
\end{aligned}
\end{equation*}
That is,\\
\begin{equation*}
(\frac{J-N}{J})^{\frac{d-1}{p}}\|\underset{1\leq \lambda\leq N}{\text{sup}^{+}}M_{\lambda}^{\gamma}x\|_{L_{p}(\mathcal{M})}\leq C\|x\|_{L_{p}(\mathcal{M})}.\\
\end{equation*}
A limiting argument on $J$ yields the estimate \eqref{20240930e}, completing the proof.\\
\qed

\noindent {\bf Acknowledgements}  \ 
G. Hong is partially supported by National Natural Science Foundation of China (No. 12071355, No. 12325105, No. 12031004, No. W2441002).

\end{document}